%FINAL VERSION  August 4, 2002
%MINOR CORRECTION AFTER PUBLICATION   May 10 2003
%    This document is currently set up to compile under LaTeX2e.
%    The macros: prepictex.tex, pictex.tex, and postpictex.tex are also
%     required for the full compilation of the document.

\documentclass{nyj}

\usepackage{amsmath}
\usepackage{amscd}
\usepackage{amssymb}
\numberwithin{equation}{section}

\input{prepictex}
\input{pictex}
\input{postpictex}

\newtheorem{theorem}{Theorem}[section]

\newtheorem{lemma}[theorem]{Lemma}

\newtheorem{corollary}[theorem]{Corollary}

\newtheorem{proposition}[theorem]{Proposition}

\theoremstyle{definition}

\newtheorem{definition}[theorem]{Definition}

\newtheorem{example}[theorem]{Example}

\newtheorem{remark}[theorem]{Remark}

% fixits to enable me to maintain consistency

% blackboard bold definitions

\newsymbol\kk 207C

\newcommand{\FF}{{\mathbb F}}

\newcommand{\HH}{{\mathbb H}}

\newcommand{\NN}{{\mathbb N}}

\newcommand{\QQ}{{\mathbb Q}}

\newcommand{\ZZ}{{\mathbb Z}}

% gothic definitions
\newcommand{\ff}{{\mathfrak f}}

% caligraphic definitions

\newcommand{\cA}{{\mathcal A}}

\newcommand{\cH}{{\mathcal H}}

\newcommand{\cR}{{\mathcal R}}

\newcommand{\cV}{{\mathcal V}}

\newcommand{\cY}{{\mathcal Y}}

\newcommand{\id}{{\bf 1}}

% overlined characters

% symbolic definitions

%
\newcounter{picture}

% greek definitions
\newcommand{\D}{{\Delta}}
\newcommand{\G}{{\Gamma}}

\newcommand{\Om}{{\Omega}}

\newcommand{\g}{{\gamma}}
\newcommand{\s}{{\sigma}}
\newcommand{\w}{{\omega}}

% other definitions

\newcommand{\aut}{{\text{\rm Aut}}}
\newcommand{\traut}{{\text{\rm Aut}_{\text{\rm tr}}(\D)}}
\newcommand{\autD}{{\text{\rm Aut}(\D)}}

\newcommand{\sh}{{\sigma}}
\newcommand{\tA}{{\widetilde{A}}}
\newcommand{\st}{{\; ;\;}}
\newcommand{\axa}{{\tA_1\times\tA_1}}
\newcommand{\zomatrix}{{$\{0,1\}$-matrix }}
\newcommand{\zomatrices}{{$\{0,1\}$-matrices }}

\DeclareMathOperator{\rank}{rank}

\DeclareMathOperator{\coker}{coker}

\DeclareMathOperator{\tors}{tors}
\DeclareMathOperator{\conv}{conv}
   %QUERY IN MARGIN
\begin{document}

\title[Groups acting on products of trees]
{Groups acting on products of trees, tiling systems and analytic K-theory}

\date{August 4, 2002}
\author{Jason S. Kimberley}
\author{Guyan Robertson}
\address{Mathematics Department, University of Newcastle, Callaghan, NSW
2308, Australia}
\subjclass{Primary 20E08, 51E24; secondary 46L80}
\keywords{group actions, trees, K-theory, $C^*$-algebras.}
\email{guyan@maths.newcastle.edu.au}
\email{Jason.S.Kimberley@telstra.com}
\thanks{This research was funded by the Australian Research Council.
The second author is also grateful for the support of the University
of Geneva.}
\begin{abstract}
Let  $T_1$ and $T_2$ be homogeneous trees of even degree $\ge 4$. A BM group $\Gamma$ is a torsion free discrete subgroup of $\aut (T_1) \times \aut (T_2)$
which acts freely and transitively on the vertex set of $T_1 \times T_2$.
This article studies dynamical systems associated with BM groups.
A higher rank Cuntz-Krieger algebra $\mathcal A(\G)$ is associated both with a 2-dimensional tiling system and with a boundary action of a BM group $\Gamma$. An explicit expression is given for the K-theory of $\mathcal A(\G)$. In particular $K_0=K_1$.
A complete enumeration of possible BM groups $\G$ is given for a product homogeneous trees of degree 4, and the K-groups are computed.
\end{abstract}

\maketitle

\section{Introduction.}

The structure of a group which acts freely and cocompactly on a tree
is well understood. Any such a group is a finitely generated free group.
By way of contrast, a group which acts in a similar manner on a product of
trees can have remarkably subtle properties.  For example, M. Burger and
S. Mozes  \cite{bm, bm2} have proved rigidity and arithmeticity results
analogous to the theorems of Margulis for lattices in semisimple Lie groups.

This article will consider a discrete subgroup $\Gamma$ of  $\aut (T_1) \times \aut (T_2)$
where $T_1$, $T_2$ are homogeneous trees of finite degree.
In addition, we require that $\Gamma$ is torsion free and acts freely and
transitively on the vertex set of $T_1 \times T_2$. For simplicity, refer to such a group as a BM group.  BM groups were used in \cite{bm} to exhibit the first known examples
of finitely presented, torsion free, simple groups.

A product of two trees may be regarded as the 1-skeleton of an affine building whose
2-cells are euclidean squares. A BM group $\Gamma$ acting freely and
transitively on the vertex set of $T_1 \times T_2$ defines a
2-dimensional tiling system. Associated to this tiling system there is a $C^*$-algebra $\cA$, which is called a rank-2 Cuntz-Krieger algebra in \cite{rs2}. This algebra is
isomorphic to a crossed product $C^*$-algebra $\cA(\G)$ arising from a boundary action of $\Gamma$.
It follows from the results of \cite{rs2} that $\cA(\G)$ is purely infinite, simple, unital  and nuclear,
and is therefore itself classified by its K-theory. This provides the motivation
for us to examine the K-theory of these examples in some detail. In Theorem \ref{mainK} we obtain an explicit expression for the K-theory of $\cA(\G)$ analogous to that of \cite{rs3} for algebras associated with $\tA_2$ buildings. In particular $K_0=K_1$ for this algebra. In Proposition \ref{orderof1}, the class of the identity in $K_0$ is shown to be a torsion element.

In Section \ref{exs}, these issues are examined for several explicit groups.
In Section \ref{bm2by2} a complete list is given of all BM groups acting on $T_1 \times T_2$, where $T_1$, $T_2$ are homogeneous trees of degree four. The abelianizations and K-groups are also computed.

After this article was submitted, we became aware of the work of Diego Rattaggi \cite{rat}, which undertakes a detailed analysis of BM groups, including extensive computations with explicit presentations.  We are grateful to him for several helpful comments on this article.

\section{Products of trees and their automorphisms.}%

Given a homogeneous tree~$T$, there is a type map~$\tau$ defined on the
vertices of~$T$ and taking values in $\ZZ/2\ZZ$.
To see this, fix a vertex $v_0\in T$ and define
\[
%\frac{1-(-1)^{d(v_0,v)}}{2}=
\tau(v)=  d(v_0,v) \pmod{2},
\]
where $d(u,v)$ denotes the usual graph distance between vertices of the tree.
The type map partitions the set of vertices into two classes so that two vertices are in the same class if and only if the distance between them is even. Thus the type map is independent of $v_0$, up to addition of $1 \pmod{2}$. Since any automorphism of the tree preserves distances between vertices this observation proves
\begin{lemma}\label{lem1}
For each automorphism $g$ of $T$ there exists $i\in\ZZ/2\ZZ$ such that, for every vertex $v$, $\tau(gv)= \tau(v)+i $ %\pmod{2}$.
\end{lemma}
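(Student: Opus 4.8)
The plan is to show that the type-shift $i$ is well-defined and the same for all vertices $v$, which is exactly the content of the lemma.

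The plan is to reduce the statement to a single algebraic fact, already implicit in the preceding discussion: the parity of the distance between two vertices equals the difference of their types. First I would record the identity
\[
\tau(u) - \tau(v) \equiv d(u,v) \pmod{2},
\]
valid for an arbitrary pair of vertices $u,v$ (not merely relative to the base vertex $v_0$). This is just a restatement of the observation that two vertices lie in the same type class precisely when the distance between them is even: both sides equal $0$ in $\ZZ/2\ZZ$ exactly when $d(u,v)$ is even, and both equal $1$ otherwise.

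Next I would invoke the fact, noted in the text, that the automorphism $g$ preserves graph distance, so that $d(gu,gv)=d(u,v)$ for all vertices $u,v$. Applying the displayed identity to the image vertices $gu$ and $gv$ and then using distance-preservation gives
\[
\tau(gu) - \tau(gv) \equiv d(gu,gv) = d(u,v) \equiv \tau(u) - \tau(v) \pmod{2}.
\]
Rearranging yields $\tau(gu) - \tau(u) \equiv \tau(gv) - \tau(v) \pmod{2}$, so the element $\tau(gv) - \tau(v) \in \ZZ/2\ZZ$ is the same for every vertex $v$. Defining $i$ to be this common value then gives $\tau(gv) = \tau(v) + i$ for all $v$, as required.

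As for obstacles, there is essentially no hard step: the whole content is carried by the parity identity, after which the conclusion is a one-line rearrangement. The only point deserving a moment's care is justifying that identity for an \emph{arbitrary} pair $u,v$ rather than only for pairs involving $v_0$; this is exactly the reformulation of the ``same class if and only if even distance'' statement, and it is precisely what makes the type difference behave additively and hence be respected by any distance-preserving map.
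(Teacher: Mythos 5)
Your proof is correct and is essentially the paper's own argument: the paper proves the lemma in one line by observing that the type classes are exactly the classes of vertices at mutually even distance, so any distance-preserving map permutes them, and your parity identity $\tau(u)-\tau(v)\equiv d(u,v) \pmod 2$ is just an explicit algebraic formulation of that observation. The one step you rightly flag---extending the identity from pairs involving $v_0$ to arbitrary pairs, which rests on the bipartiteness of the tree---is the same point the paper disposes of when it notes the partition is independent of the choice of $v_0$.
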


Suppose that~$\D$ is the $2$ dimensional cell complex associated with  a product $T_1\times T_2$ of homogeneous trees. Then $\D$ is an affine building of type $\axa$ in a natural way \cite{ron}.
Write $u=(u_1,u_2)$ for a generic vertex of $\D$.
There is a type map~$\tau$ on the vertices of~$\D$ where
$$
\tau(v)=
(\tau(v_1),\tau(v_2))\in\ZZ/2\ZZ\times\ZZ/2\ZZ.
$$
We say that an automorphism $g$ of~$\D$ is {\bf
type-rotating} if there exists
$(i_1,i_2)\in\ZZ/2\ZZ\times\ZZ/2\ZZ$
such that, for each vertex $v$,
$$
\tau(gv)=(\tau(v_1)+i_1,\tau(v_2)+i_2).
$$
The chambers of $\D$ are geometric squares and each chamber has exactly one vertex of each type.
We denote by $\traut$ the group of type rotating automorphisms of $\D$.

\begin{lemma} An automorphism $g$ of~$\D$ is
type-rotating if and only if it is a Cartesian
product of automorphisms of the two trees.
\end{lemma}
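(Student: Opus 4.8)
The plan is to prove both implications, treating the reverse direction as an immediate consequence of Lemma~\ref{lem1} and reserving the real work for the forward direction. Recall that by a \emph{Cartesian product} we mean an automorphism of the form $g(u_1,u_2)=(g_1u_1,g_2u_2)$ with $g_j\in\aut(T_j)$. For the easy implication, suppose $g=g_1\times g_2$. Lemma~\ref{lem1} supplies $i_j\in\ZZ/2\ZZ$ with $\tau(g_jw)=\tau(w)+i_j$ for every vertex $w$ of $T_j$, and hence $\tau(gv)=(\tau(g_1v_1),\tau(g_2v_2))=(\tau(v_1)+i_1,\tau(v_2)+i_2)$, so $g$ is type-rotating.

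For the forward direction the first step is to classify the edges of $\D$ by type. Every edge joins $(u_1,u_2)$ to a vertex obtained by moving in exactly one factor: a \textbf{horizontal} edge moves in $T_1$, and its endpoints have types differing by $(1,0)$, while a \textbf{vertical} edge moves in $T_2$, with type difference $(0,1)$. Since a type-rotating $g$ satisfies $\tau(gv)-\tau(gw)=\tau(v)-\tau(w)$ for all vertices $v,w$ (the common shift $(i_1,i_2)$ cancels), $g$ preserves the type difference across every edge, and therefore carries horizontal edges to horizontal edges and vertical edges to vertical edges. This is the crux of the argument, and I expect it to be the main obstacle: a priori an automorphism of $\D$ could interchange the two factors (when $T_1\cong T_2$), converting horizontal edges into vertical ones, and the content of the lemma is precisely that the type-rotating hypothesis rules out this swap.

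Granting that, I would construct the factors as follows. Writing $\pi_j$ for the projection onto the $j$-th coordinate, note that the vertical fibre $\{u_1\}\times T_2$ is connected by vertical edges, and that moving along a vertical edge never alters the first coordinate. Because $g$ sends vertical edges to vertical edges, it maps any vertical path to a vertical path, along which the first coordinate is constant; hence $\pi_1(g(u_1,u_2))$ is independent of $u_2$. Symmetrically $\pi_2(g(u_1,u_2))$ is independent of $u_1$. I can then define $g_1(u_1)=\pi_1(g(u_1,u_2))$ and $g_2(u_2)=\pi_2(g(u_1,u_2))$, which immediately gives $g=g_1\times g_2$.

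It remains to verify $g_1,g_2\in\aut$, which is bookkeeping. Bijectivity of each $g_j$ follows from that of $g$ together with the product formula. For adjacency, $u_1\sim u_1'$ in $T_1$ iff $(u_1,u_2),(u_1',u_2)$ span a horizontal edge; applying the horizontal-edge-preservation of $g$ gives $g_1u_1\sim g_1u_1'$, and applying the same reasoning to $g^{-1}$ (which is again type-rotating, with shift $(i_1,i_2)$ since we work modulo $2$) shows $g_1$ is a graph isomorphism, hence a tree automorphism; the argument for $g_2$ is identical.
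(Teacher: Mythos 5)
Your proof is correct and takes essentially the same route as the paper: your observation that a type-rotating $g$ preserves the type difference across each edge, hence sends vertical edges to vertical edges, is exactly the paper's step that neighbouring vertices of $T_1$ have distinct types so the images of $(u_1,u_2)$ and $(u_1,u_2')$ share their first coordinate, and your fibre-connectivity argument is just the paper's induction on $d(u_2,u_2')$ repackaged. The remaining steps (defining $g_1,g_2$ by the now well-defined coordinate projections and checking they are tree automorphisms, the converse via Lemma~\ref{lem1}) match the paper's proof as well.
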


\begin{proof}
 If we have
\[
g(u_1,u_2)=(g_1 u_1,g_2 u_2)
\]
for some type rotating automorphisms $g_i$ of $\ T_i$ then it follows from Lemma \ref{lem1} that $g$ is type rotating.
Conversely suppose that $g$ is type rotating.
Let $(u_1,u_2)$ and $(u_1,u_2')$ be neighbouring vertices in~$\D$.
Then~$g(u_1,u_2)=(x_1,x_2)$ and $g(u_1,u_2')=(x_1',x_2')$ are neighbouring vertices
in~$\D$ and the type-rotating assumption on~$g$ means that
$\tau(x_1)=\tau(x_1')$.  Since neighbouring vertices in~$\ T_1$ have
distinct types we must have $x_1=x_1'$. By induction on~$d(u_2,u_2')$, we see
that the first coordinate of~$g(u_1,u_2)$ is independent
of~$u_2\in\ T_2$. Similarly, the second coordinate of~$g(u_1,u_2)$ is
independent of~$u_1\in\ T_1$. Thus there exist maps~$g_1$ of~$\ T_1$ and
$g_2$ of~$\ T_2$ such that~$g(u_1,u_2)=(g_1 u_1,g_2 u_2)$. Since~$g$ is an automorphism of $\D$
it follows that each $g_i$ is an automorphism of~$\ T_i$. Thus $g = g_1\times g_2$ for some
automorphisms~$g_i$ of~$\ T_i$.
\end{proof}

\begin{corollary} $\traut = \aut (T_1) \times \aut (T_2)$.
\end{corollary}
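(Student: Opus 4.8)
The plan is to read this off the preceding lemma, after recording two routine structural facts. First I would introduce the natural map $\Phi\colon\aut(T_1)\times\aut(T_2)\to\autD$ sending a pair $(g_1,g_2)$ to the automorphism $g_1\times g_2$ given by $(g_1\times g_2)(u_1,u_2)=(g_1u_1,g_2u_2)$. A quick check shows $\Phi$ is a group homomorphism (composition is componentwise) and is injective (each $g_i$ is recovered by restricting $g_1\times g_2$ to a line in the $i$-th direction). This lets me regard $\aut(T_1)\times\aut(T_2)$ as a subgroup of $\autD$, which is the sense in which the claimed equality is to be read.

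Next I would prove the inclusion $\aut(T_1)\times\aut(T_2)\subseteq\traut$. Given any pair $(g_1,g_2)$, Lemma~\ref{lem1} supplies $i_1,i_2\in\ZZ/2\ZZ$ with $\tau(g_jv)=\tau(v)+i_j$ on each tree $T_j$. Substituting this into the ``if'' half of the preceding lemma shows that $g_1\times g_2$ is type-rotating, so $\Phi(g_1,g_2)\in\traut$. The reverse inclusion $\traut\subseteq\aut(T_1)\times\aut(T_2)$ is exactly the ``only if'' half of the preceding lemma: any type-rotating $g$ equals $g_1\times g_2$ for some automorphisms $g_i$ of $T_i$, hence lies in the image of $\Phi$. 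Combining the two inclusions yields $\traut=\aut(T_1)\times\aut(T_2)$ as subgroups of $\autD$.

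I do not expect a genuine obstacle, since the entire content is carried by the preceding lemma. The one point deserving explicit mention — and the only place requiring a moment's care — is that Lemma~\ref{lem1} guarantees that \emph{every} tree automorphism shifts the $\ZZ/2\ZZ$-type by a constant. This is what lets me apply the ``if'' direction to an arbitrary product $g_1\times g_2$, rather than only to products whose factors are assumed type-rotating in advance; there is no such constraint at the level of a single tree. The secondary point worth stating is the identification via $\Phi$, so that the symbol ``$=$'' denotes an honest equality of subgroups of $\autD$ and not merely an abstract isomorphism.
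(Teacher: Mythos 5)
Your proof is correct and follows essentially the same route as the paper, which states the corollary without separate proof as an immediate consequence of the preceding lemma: the lemma's two directions give exactly your two inclusions, with Lemma~\ref{lem1} supplying (as you rightly flag) that \emph{every} tree automorphism shifts the type by a constant, so arbitrary products $g_1\times g_2$ are type-rotating. Your explicit treatment of the embedding $\Phi$ and its injectivity is a harmless elaboration of what the paper leaves implicit.
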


An {\bf apartment} in~$\D$ is a subcomplex isomorphic to the plane tessellated by squares. See \cite[p. 184]{ron} for some comments on this and alternative ways of looking at apartments.
Denote by~$ \cV$ the vertex set of~$\D$. Any two vertices~$u,v\in \cV$ belong
to a common apartment. The convex hull, in the sense of buildings, between
two vertices $u$ and $v$ is the subset of an apartment containing $u$ and $v$ depicted in Figure~\ref{convex hull}.
\refstepcounter{picture}
\begin{figure}[htbp]
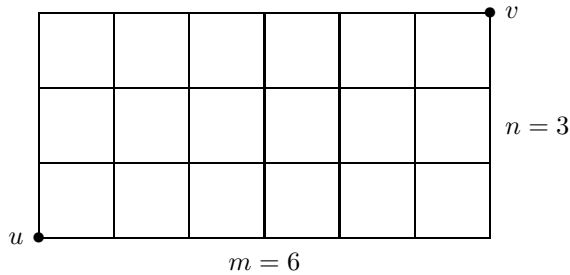
\label{convex hull}
\hfil
\centerline{
\beginpicture
\setcoordinatesystem units <1cm, 1cm>
\setplotarea  x from -6 to 6,  y from -1 to 2.2
\putrule from -3 -1 to 3 -1
\putrule from -3 0 to 3 0
\putrule from -3 1 to 3 1
\putrule from -3 2 to 3 2
\putrule from -3 -1 to  -3 2
\putrule from -2 -1 to  -2 2
\putrule from -1 -1 to  -1 2
\putrule from 0 -1 to  0 2
\putrule from 1 -1 to  1 2
\putrule from 2 -1 to  2 2
\putrule from 3 -1 to  3 2
\put {$u$} [r]     at   -3.2 -1
\put {$\bullet$} at -3 -1
\put {$v$} [l]     at   3.2 2
\put {$\bullet$} at 3 2
\put {$m=6$} [t]     at   0 -1.2
\put {$n=3$} [l]     at   3.2  0.5
\endpicture
}
\hfil
\caption{Convex hull of two vertices.}
\end{figure}
The convex hull of $u$ and $v$ is contained in every apartment of $\D$ which contains $u$ and $v$.

Define the {\bf distance}, $d(u,v)$, between $u$ and $v$ to be the
graph theoretic distance on the one-skeleton of~$\D$. Any path from
$u$ to $v$ of length $d(u,v)$ lies in their convex hull, and the union
of the vertices in such paths is exactly the set of vertices in the
convex hull.

We define the {\bf shape}~$\sh(u,v)$ of the ordered pair of vertices
$(u,v)\in \cV\times \cV$ to be the pair $(m,n)\in\NN\times\NN$ as
indicated in Figure~\ref{convex hull}.  Note that $d(u,v)=m+n$. The
components of $\sh(u,v)$ indicate the relative contributions to
$d(u,v)$ from the two factors.  If $v=(v_1,v_2)$ and $w=(w_1,w_2)$ are vertices of $\D$,
the {\bf shape} from $v$ to $w$ is
\[
\sh(v,w)=\left( d(v_1,w_1), d(v_2,w_2)\right)
\]
where $d$ denotes the usual graph-theoretic distance on a tree.
An edge in $\D$ connects the vertices $v$ and $w$ if $\sh(v,w)=(0,1)$ or
$\sh(v,w)=(1,0)$.

\begin{lemma}\label{shape decomposition for vertices}
Suppose $m_1,m_2,n_1,n_2\in\NN$ and $\sh(u,w)=(m_1+m_2,n_1+n_2)$ for
vertices $u,w\in \cV$. Then there is a unique vertex $v\in \cV$ such that
\[
\sh(u,v)=(m_1,n_1)\quad\text{ and }\quad \sh(v,w)=(m_2,n_2).
\]
\end{lemma}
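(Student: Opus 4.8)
The plan is to exploit the product structure of $\D$ in order to reduce the problem to two independent statements about the individual trees. Writing $u=(u_1,u_2)$ and $w=(w_1,w_2)$, the definition of shape gives $\sh(u,w)=(d(u_1,w_1),d(u_2,w_2))$, so the hypothesis means $d(u_1,w_1)=m_1+m_2$ and $d(u_2,w_2)=n_1+n_2$. A vertex $v=(v_1,v_2)$ satisfies the two required shape conditions precisely when $d(u_1,v_1)=m_1$, $d(v_1,w_1)=m_2$ in $T_1$ and $d(u_2,v_2)=n_1$, $d(v_2,w_2)=n_2$ in $T_2$. It therefore suffices to prove the following one-dimensional statement: in a homogeneous tree $T$, given vertices $a,b$ with $d(a,b)=p+q$, there is a unique vertex $c$ with $d(a,c)=p$ and $d(c,b)=q$. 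Applying this in each factor and recombining then yields the desired $v$.

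For existence I would use that any two vertices of a tree are joined by a unique geodesic $[a,b]$, and take $c$ to be the vertex of $[a,b]$ at distance $p$ from $a$; since $[a,b]$ has length $p+q$, this $c$ automatically satisfies $d(c,b)=q$. For uniqueness the key observation is that any vertex $c$ with $d(a,c)+d(c,b)=d(a,b)$ must in fact lie on $[a,b]$. I would prove this by letting $m$ be the vertex of $[a,b]$ closest to $c$; because $T$ is a tree, the geodesics $[a,c]$ and $[c,b]$ both run through $m$, so that $d(a,c)+d(c,b)=d(a,b)+2d(m,c)$, and the additivity hypothesis forces $d(m,c)=0$, whence $c=m\in[a,b]$. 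The further condition $d(a,c)=p$ then pins $c$ down uniquely along $[a,b]$. Carrying this out in both coordinates produces the unique $v_1$ and $v_2$, and hence the unique vertex $v=(v_1,v_2)$.

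The argument becomes routine once the reduction to trees is made, so I do not anticipate a genuine obstacle. The single point needing care is the tree-geometric fact that additivity of distances characterizes exactly the vertices lying on a geodesic, which rests on the uniqueness of geodesics in a tree (equivalently, on $T$ being $0$-hyperbolic). The product structure decouples the two coordinates completely, so everything reduces to this elementary median/projection computation performed separately in each factor.
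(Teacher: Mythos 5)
Your proof is correct. It differs from the paper's in presentation, though the underlying geometry is the same: the paper simply observes that additivity $d(u,w)=d(u,v)+d(v,w)$ forces $v$ into the convex hull of $u$ and $w$ (the rectangle of Figure~1) and then declares existence and uniqueness ``clear'' inside that rectangle, whereas you decouple the coordinates from the outset using $\sh(v,w)=\bigl(d(v_1,w_1),d(v_2,w_2)\bigr)$ and reduce to a one-dimensional lemma in each tree factor. Since the convex hull in $\D$ is precisely the product of the two tree geodesics $[u_1,w_1]\times[u_2,w_2]$, your reduction is the paper's argument made explicit. What your route buys is a complete, self-contained justification of the step the paper leaves implicit: existence comes from choosing the point at distance $p$ along the unique geodesic $[a,b]$, and uniqueness from the standard projection computation $d(a,c)+d(c,b)=d(a,b)+2\,d(m,c)$ (with $m$ the point of $[a,b]$ nearest $c$), which shows that additivity of distances in a tree characterizes exactly the vertices on the geodesic. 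The paper's version is shorter and phrased building-theoretically, which fits its later use of convex hulls and apartments; yours is more elementary and would generalize verbatim to any finite product of trees. Minor remark: homogeneity of the trees plays no role in your tree lemma, which is fine, and your identification of the two shape conditions with the four coordinate distance conditions is exactly the definition of shape given in the paper, so no gap arises there.
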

\begin{proof}
Such a ~$v\in \cV$ satisfies $d(u,w)=d(u,v)+d(v,w)$ so it must lie in the
convex hull of~$u$ and~$w$. Inside the convex hull existence and uniqueness
of~$v$ are clear.
\end{proof}

It is a direct consequence of the definitions that every type-rotating
automorphism~$g\in\autD$ preserves shape in the sense that
$\sh(gu,gv)=\sh(u,v)$ for all~$u,v\in \cV$.

\bigskip

\section{Groups which act freely and transitively on the vertices of $\D$}\label{simply transitive actions}

Suppose that~$\G\leq\traut$ acts freely and transitively on the vertex set $\cV$.
Fix any vertex~$v_0\in \cV$ and let
\[
N=\{ a\in\G\st d(v_0,av_0)=1\}.
\]
The {\bf Cayley graph} of~$\G$ constructed via right multiplication with
respect to the set~$N$ has~$\G$ itself as its vertex set and has
$\{(c,ca)\st c\in\G,a\in N\}$ as its edge set.
There is a natural action of~$\G$ on its Cayley graph via left multiplication.
Using the convention that an undirected edge between vertices~$u$ and~$v$ in
a graph represents the pair of directed edges~$(u,v)$ and~$(v,u)$, it is
immediate that the $\G$-map $c\mapsto cv_0$ from~$\G$ to~$\D$ is an
isomorphism between the Cayley graph of~$\G$ and the one-skeleton of~$\D$.
In this way we identify~$\G$ with the vertex set~$ \cV$ of the $\D$.
Connectivity of the $\D$ implies that~$N$ is a generating set for~$\G$.

It is traditional to label the directed edge~$(c,ca)$ with the
generator~$a\in N$. More generally, to the pair $(c,d)\in\G\times\G$ we
assign the
label~$c^{-1}d$. Equivalently, to the pair~$(c,cd)$ we assign the
label~$d\in\G$. Suppose this label is written as a product of generators;
$d=a_1\cdots a_j$. Then there is a path $(c,ca_1,ca_1a_2,\ldots,cd)$
from~$c$ to~$cd$ whose successive edges are labelled $a_1,\ldots,a_j$.
The left translate of~$(c,cd)$ by $b\in\G$ is~$(bc,bcd)$ and also carries
the label~$d$. Conversely, any pair $(c',c'd)$ which carries the label~$d$
is the left translate by~$b=c'c^{-1}$ of~$(c,cd)$. Thus two pairs carry
the same label if and only if one is the left translate of the other.

We define a shape function on~$\G$ by
\[
\sh(b)=\sh(v_0,bv_0)
\]
for $b\in\G$. The pair~$(c,cb)$ has label~$b$ and its shape, defined via
the identification of the Cayley graph and the one-skeleton of~$\D$, is
\[
\sh(c,cb)=\sh(cv_0,cbv_0)=\sh(v_0,bv_0)=\sh(b).
\]
A different choice of~$v_0$ leads to a shape function on~$\G$ which
differs from the first by an inner automorphism
of~$\G$.

\begin{definition} \cite{rrs}
Suppose that the group $\G\leq\traut$  acts
freely and transitively on $ \cV$.  Then $\G$~is called an
{\bf {\boldmath$\tA_{1}\times\tA_{1}$} group}.
\end{definition}

Consider an $\tA_1\times\tA_1$ group $\G$. Fix a vertex $v_0=(v_1,v_2)\in \cV$ and
suppose that~$g=g_1\times g_2\in\G$. Recall that
\[
\sh(g)=\sh(v_0,gv_0)= \left( d(v_1,g_1v_1), d(v_2,g_2v_2)\right).
\]
Consider the generating set
\[
N=\{ g\in\G\st d(v_0,gv_0)=1\}
\]
of~$\G$. Let
\begin{equation}\label{AB}
A=\{ a\in\G\st \sh(a)=(1,0)\}
\quad \text{ and }\quad
B=\{ b\in\G\st \sh(b)=(0,1)\} .
\end{equation}

\begin{lemma}\label{normal form}
Each element~$g\in\G$ has a unique reduced expression of the form
\[
g=a_1\cdots a_mb_1\cdots b_n
\]
and of the form
\[
g=b^\prime_1\cdots b^\prime_na^\prime_1\cdots a^\prime_m
\]
for some $a_i, a^\prime_i\in A$ and $b_i, b^\prime_i\in B$.
Moreover $\sh(g)=(m,n)$.

\begin{proof}
This follows immediately from Lemma \ref{shape decomposition for vertices}.
\end{proof}
\end{lemma}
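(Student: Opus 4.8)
The plan is to exploit the identification of $\G$ with the vertex set $\cV$ via $c\mapsto cv_0$, under which a word in the generating set $N=A\cup B$ corresponds to an edge-path in the one-skeleton of $\D$ issuing from $v_0$, and a reduced expression corresponds to a geodesic path. Under this identification $g$ corresponds to the vertex $gv_0$, and since $\sh$ is invariant under $\G$ we have $\sh(g)=\sh(v_0,gv_0)$; write this as $(m,n)$. The whole lemma then becomes a translation of the geometric fact that the convex hull of $v_0$ and $gv_0$ carries a unique monotone geodesic which takes all of its horizontal ($\sh=(1,0)$) steps before its vertical ($\sh=(0,1)$) steps.

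For existence, I would apply Lemma \ref{shape decomposition for vertices} repeatedly. Starting from $\sh(v_0,gv_0)=(m,n)=(1,0)+(m-1,n)$, the lemma produces a unique vertex $w_1$ with $\sh(v_0,w_1)=(1,0)$ and $\sh(w_1,gv_0)=(m-1,n)$; iterating splits off one unit of shape at a time and yields a chain $v_0=w_0,w_1,\dots,w_{m+n}=gv_0$ whose first $m$ steps have shape $(1,0)$ and whose last $n$ steps have shape $(0,1)$. By freeness each $w_i=c_iv_0$ for a unique $c_i\in\G$; setting $a_i=c_{i-1}^{-1}c_i$ and using shape-invariance gives $\sh(a_i)=\sh(w_{i-1},w_i)=(1,0)$, so $a_i\in A$, and likewise the last $n$ labels $b_j$ lie in $B$. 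Telescoping $c_0^{-1}c_{m+n}$ gives $g=a_1\cdots a_mb_1\cdots b_n$.

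For uniqueness, and for the identity $\sh(g)=(m,n)$, I would read any reduced expression $g=a_1\cdots a_mb_1\cdots b_n$ back as a path and project to the two tree factors. The $A$-block holds the $T_2$-coordinate fixed at $v_2$ while walking $m$ steps in $T_1$, and the $B$-block holds the $T_1$-coordinate fixed while walking $n$ steps in $T_2$; hence the path has length $m+n\ge d(v_1,x_1)+d(v_2,x_2)=d(v_0,gv_0)$, where $gv_0=(x_1,x_2)$. Reducedness forces equality, so $m=d(v_1,x_1)$ and $n=d(v_2,x_2)$ — which is precisely $\sh(g)=(m,n)$ — and both projected walks are tree geodesics. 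As geodesics in a tree are unique, the intermediate vertices $w_i$ coincide with those constructed above; freeness then pins down each $c_i$, and hence each label, giving uniqueness. The second normal form follows by the same argument with the roles of $(1,0)$ and $(0,1)$ interchanged, splitting off vertical steps first.

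The main obstacle worth spelling out is the precise meaning of \emph{reduced} and the verification that the ordering ``$A$-letters before $B$-letters'' genuinely confines the path to the convex hull where Lemma \ref{shape decomposition for vertices} applies: one must check that no backtracking can occur across, or within, the two blocks without lengthening the path, which is exactly what links ``reduced word'' to ``geodesic path'' and thereby to the unique monotone corner path of the $m\times n$ grid.
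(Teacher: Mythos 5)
Your proof is correct and takes essentially the same route as the paper, whose entire proof is the single line ``this follows immediately from Lemma \ref{shape decomposition for vertices}'': your iterated splitting of the shape $(m,n)$ into unit steps $(1,0)$ and $(0,1)$, together with freeness of the action to convert the unique intermediate vertices into unique group elements, is exactly the expansion that remark leaves implicit. Your closing discussion of why ``reduced'' forces the projected walks to be tree geodesics is a worthwhile filling-in of detail, not a departure from the paper's argument.
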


\bigskip

In \cite[Section 1]{bm}, M. Burger and S. Mozes constructed a class of groups which act freely and transitively on the vertices of a product of trees. It is convenient to refer to these groups as BM groups.
Our aim now is to show that the class of BM groups
coincides with the class of torsion free $\axa$ groups.

\begin{definition}\label{def BM group}
\cite[Section 1]{bm}
A {\bf BM group} is defined as follows.
Choose sets $A, B$, with $|A|=m, |B|=n$ where $m, n \ge 4$ are even integers. Choose fixed point free involutions
$a \mapsto a^{-1}$, $b \mapsto b^{-1}$ on $A, B$ respectively and a subset $\cR \subset A\times B\times B\times A$
with the following properties.
\begin{itemize}
\item[(i)] If $(a,b,b',a')\in \cR$ then each of
$(a^{-1},b',b,{a'}^{-1}),({a'}^{-1},{b'}^{-1},b^{-1},a^{-1})$,
and $(a',b^{-1},{b'}^{-1},a)$ belong to $\cR$.
\item[(ii)] All four $4$-tuples in (i) are distinct. Equivalently
$(a,b,b^{-1},a^{-1})\notin \cR$ for $a\in A, b\in B$.
\item[(iii)] Each of the four projections of $\cR$ to a subproduct of the form $A\times B$ or $B\times A$ is bijective. Equivalently at least one such projection is bijective.
\end{itemize}
A {\bf BM square} is defined to be a set of four distinct tuples as
in (ii), that is four element subsets of $A\times B\times B\times A$ of the form
$$\{(a,b,b',a'), (a^{-1},b',b,{a'}^{-1}) ,({a'}^{-1},{b'}^{-1},b^{-1},a^{-1}), (a',b^{-1},{b'}^{-1},a)\}.$$
\end{definition}
If $(a,b,b',a')\in \cR$ then write $ab\square b'a'$.
\refstepcounter{picture}
\begin{figure}[htbp]
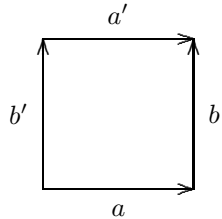
\label{geometric square}
\hfil
\centerline{
\beginpicture
\setcoordinatesystem units <1cm, 1cm>
\setplotarea  x from -6 to 6,  y from -1 to 1.2
\putrule from -1 -1 to 1 -1
\putrule from -1  1 to 1  1
\putrule from -1 -1 to -1 1
\putrule from  1  -1 to 1  1
\put {$b'$} [r]     at   -1.2 0
\put {$b$} [l]     at    1.2 0
\put {$a$} [t]     at   0 -1.2
\put {$a'$} [b]     at   0  1.2
\arrow <6pt> [.3,.67] from    0.8 -1   to    1 -1
\arrow <6pt> [.3,.67] from    0.8  1   to  1  1
\arrow <6pt> [.3,.67] from    -1  0.8   to  -1 1
\arrow <6pt> [.3,.67] from   1  0.8    to   1  1
\endpicture
}
\hfil
\caption{A geometric square.}
\end{figure}

A BM group $\G$ has presentation
\begin{equation}\label{presentation}
\G = \langle A\cup B \st ab=b'a' {\rm \quad whenever \quad} ab\square b'a' \rangle
\end{equation}

In a subsequent article  \cite{bm2}, a set of objects $(A,B,a\mapsto a^{-1},b\mapsto b^{-1}, \cR)$ satisfying the above conditions is called a {\bf VH-datum}.

\bigskip

\begin{theorem}\label{BMA} A group $\G$ is a BM group if and only if it is a torsion free $\axa$ group.
\end{theorem}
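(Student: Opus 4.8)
The plan is to prove the two implications separately, using the normal form of Lemma~\ref{normal form} as the main bridge and torsion-freeness as the feature that forces the involutions on $A$ and $B$ to be fixed point free.

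First I would treat the implication that a torsion free $\axa$ group $\G$ is a BM group. I would take $A,B$ as in \eqref{AB} and the group inverse $g\mapsto g^{-1}$ as the involutions. Since the convex hull of $v_0$ and $gv_0$ is carried to that of $g^{-1}v_0$ and $v_0$, the shape function satisfies $\sh(g^{-1})=\sh(g)$, so $A$ and $B$ are each closed under inversion. The key point is that $a=a^{-1}$ would force $a^2=1$ with $a\neq 1$ (as $\sh(a)=(1,0)$), contradicting torsion-freeness; hence each involution is fixed point free, $|A|$ and $|B|$ are even, and they equal the (even, $\ge 4$) degrees of $T_1$ and $T_2$. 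I would then define $\cR$ by declaring $(a,b,b',a')\in\cR$ precisely when $ab=b'a'$ in $\G$, where $b'a'$ is the unique rewriting of the shape-$(1,1)$ element $ab$ furnished by Lemma~\ref{normal form}.

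The verification of conditions (i)--(iii) is then mostly formal. Condition (iii) is exactly the existence and uniqueness in Lemma~\ref{normal form}: the map $\cR\to A\times B$, $(a,b,b',a')\mapsto(a,b)$, is a bijection. Condition (i) is pure algebra from $ab=b'a'$; for instance $a^{-1}b'=ba'^{-1}$ gives the first listed tuple and taking inverses gives the third. Condition (ii) is where torsion-freeness re-enters: if $(a,b,b^{-1},a^{-1})\in\cR$ then $ab=b^{-1}a^{-1}=(ab)^{-1}$, so $(ab)^2=1$ with $ab\neq 1$, again impossible. Finally I must check that $\G$ is presented by \eqref{presentation} and not merely that these relations hold: the relations hold in $\G$, so they induce a surjection from the group $\G'$ defined by \eqref{presentation} onto $\G$; but condition (iii) makes the defining relations into a complete (terminating and confluent) rewriting system on $\G'$, so every element of $\G'$ has a normal form $a_1\cdots a_m b_1\cdots b_n$, matching Lemma~\ref{normal form} in $\G$. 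The surjection is therefore a bijection on normal forms, hence an isomorphism.

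For the converse I would realise a BM group geometrically. Form the square complex $X$ with a single vertex, one horizontal edge-loop for each pair $\{a,a^{-1}\}$, one vertical edge-loop for each pair $\{b,b^{-1}\}$, and one square $2$-cell with boundary word $ab{a'}^{-1}{b'}^{-1}$ for each BM square. Condition (iii) guarantees that each ordered pair in $A\times B$ occurs exactly once as a corner, so the link of the vertex is the complete bipartite graph $K_{m,n}$; by the construction of \cite[Section 1]{bm} (see also \cite{ron}) the universal cover $\widetilde X$ is then the product $T_1\times T_2$ of regular trees of degrees $m$ and $n$, which is $\mathrm{CAT}(0)$. Reading off the presentation of $\pi_1(X)$ from the single vertex, the horizontal and vertical loops, and the square relators recovers exactly \eqref{presentation}, so $\G\cong\pi_1(X)$ acts on $\widetilde X=T_1\times T_2$ as the deck group: freely, transitively on vertices (because $X$ has a single vertex), and by Cartesian products of tree automorphisms, hence inside $\traut$ by the Corollary. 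Torsion-freeness follows since $\widetilde X$ is contractible, so $X$ is a finite aspherical $2$-complex and $\mathrm{cd}(\G)\le 2<\infty$.

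The main obstacle I anticipate is not either direction's bookkeeping but the two structural facts underlying them: in the forward direction, that condition (iii) upgrades the relations $ab=b'a'$ to a confluent rewriting system delivering the normal form (so that \eqref{presentation} is the full presentation of $\G$), and in the converse, that the complete bipartite link forces $\widetilde X$ to split as a genuine product of trees. Torsion-freeness is delicate in that it cannot be read off from the free action on vertices alone---a finite order element may fix an edge midpoint rather than a vertex---so it is essential to invoke the contractibility of $\widetilde X$ (equivalently, finite cohomological dimension) rather than the vertex action directly.
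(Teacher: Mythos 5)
Your proposal is correct, and both implications follow the paper's overall architecture: the forward direction (torsion free $\axa$ group $\Rightarrow$ BM group) is essentially identical to the paper's --- same definition of $\cR$ via $ab=b'a'$, same use of torsion-freeness to make the involutions fixed point free and to rule out $(a,b,b^{-1},a^{-1})\in\cR$, and condition (iii) from Lemma \ref{normal form}; the converse uses the same one-vertex square complex with complete bipartite link whose universal cover is $T_1\times T_2$ (the paper cites \cite[Theorem 10.2]{bw} for this, and your confluent-rewriting claim in the forward direction is precisely the content of \cite[Lemma 4.3]{bw}, which the paper cites rather than reproves --- it is worth knowing that the critical-pair check there genuinely uses conditions (i) and (iii), so it should not be presented as automatic). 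The genuine divergence is in how torsion-freeness of a BM group is established. The paper invokes the Bruhat--Tits fixed point theorem to produce a $C(x)$-fixed point $p$ for a putative torsion element $x$, observes $p$ must lie in the interior of an edge or a square, and then kills each case by hand: an invariant edge forces $x^2=1$ for $x\in A\cup B$ (excluded by the fixed point free involutions), and an invariant square forces $x=ab$ with $abab=1$ (excluded by Definition \ref{def BM group}(ii) via Lemma \ref{normal form}). You instead note that $\G$ acts on the universal cover as the deck group, hence freely on the \emph{whole} contractible $2$-complex, so $\cY$ is a finite $K(\G,1)$ and $\mathrm{cd}(\G)\le 2$, which excludes torsion since finite cyclic subgroups have infinite cohomological dimension. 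This is a clean and valid shortcut --- indeed the paper itself acknowledges it in the Remark following Theorem \ref{BMA}, citing \cite[Theorem 4.13(2)]{bh} --- and it buys economy at the cost of the explicit combinatorial information the paper's case analysis exposes (which relations would create edge-inversions or $\pi$-rotations of squares). Your diagnosis of the subtlety is exactly right: freeness on vertices alone does not preclude finite-order elements fixing an edge midpoint or the center of a square, which is why either the deck-group/asphericity argument or the paper's fixed-point case analysis is needed. One further point in your favor: you explicitly verify that the surjection from the group presented by (\ref{presentation}) onto $\G$ is an isomorphism in the forward direction, a step the paper leaves implicit.
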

\begin{proof}
Suppose that $\G$ is a torsion free $\axa$ group with generating set $N=A\cup B$, where
$A$ and $B$ are described by equations (\ref{AB}). The map $c\mapsto c^{-1}$ on $N$ is fixed point free since $\G$ is torsion free. Define $\cR$ to be the set of $4$-tuples
$(a,b,b',a')\in A\times B\times B\times A$  such that $ab=b'a'$. Condition (i) for a BM group is clearly satisfied. To verify condition (ii) note that if
$(a,b,b^{-1},a^{-1})\in \cR$ then $(ab)^2=1$, contradicting the assumption that $\G$ is torsion free. Condition (iii) follows immediately from Lemma \ref{normal form}.

We now prove the converse. Given a BM group $\G$ we may construct as in \cite{bm} a cell complex $\cY$ whose fundamental group is $\G$. The complex $\cY$ has one vertex $v$ and the cells are geometric squares as in Figure \ref{geometric square}. There are $|A||B|/4$ such cells whose four edges form a bouquet of four loops meeting at $v$. The boundary labels of the directed edges are elements of $A\cup B$ and edges with the same label are identified in the complex. Definition \ref{def BM group}(ii) says that none of the cells of $\cY$ is a projective plane.

By definition, the {\bf link} of the vertex $v$ in $\cY$ is the graph $Lk(v,\cY)$ whose vertices are in 1-1 correspondence with the half-edges incident with $v$ and whose edges are in 1-1 correspondence with the corners incident at $v$. In our setup the link $Lk(v,\cY)$ is a complete bipartite graph with vertex set $A\cup B$ and an edge between each element of $A$ and each element of $B$. Intuitively, completeness of this bipartite graph means that there are no ``missing corners''.  It follows from \cite[Theorem 10.2]{bw} that the universal cover of $\cY$ is a product of homogeneous trees $\D=T_1\times T_2$, where $T_1$ has valency $|A|$ and $T_2$ has valency $|B|$. (In the terminology of \cite{bw}, $\cY$ is said to be a complete $\cV\cH$ complex.) Elements of $\G$ correspond to edge paths in $\cY$. By \cite[Lemma 4.3]{bw} each $g\in \G$ can be expressed uniquely in each of the normal forms
$
g=a_1\cdots a_mb_1\cdots b_n = b^\prime_1\cdots b^\prime_na^\prime_1\cdots a^\prime_m
$,
for some $a_i, a^\prime_i\in A$ and $b_i, b^\prime_i\in B$.
The 1-skeleton of the universal covering space $T_1\times T_2$ may therefore be identified with the Cayley graph of $\G$ with respect to the generating set $A\cup B$.
Thus $\G \subset \traut = \aut T_1 \times \aut T_2$, and $\cY=\G\backslash\D$.

Let $\G$ be a BM group with presentation (\ref{presentation}). In view of the preceding discussion we need only show that $\G$ is torsion free. The argument for this is well known \cite[VI.5, p.161, Theorem]{bk}, and it was shown to us by Donald Cartwright, in the context of $\tA_2$ groups.
Suppose that $1\ne x \in \G$ with $x^n=1$ for some integer $n>0$. Let $C(x)$ denote the cyclic group generated by $x$. Fix a vertex $v_0$ of the $1$ skeleton of $\D=T_1\times T_2$.
Then $\G v_0$ is the set of vertices of $\D$. Now the set $C(x) v_0$ is a bounded $C(x)$-stable subset of $\D$. Since the complete metric space $\D$ satisfies the negative curvature condition of \cite[VI.3b]{bk}, it follows from the Bruhat-Tits Fixed Point Theorem that there is a $C(x)$-fixed point $p\in\D$. Since the action of $\G$ is free on the vertices of $\D$, $p$ cannot be a vertex.
Thus $p$ lies in the interior of an edge $E$ or a square $S$ in $\D$, and either $E$ or $S$ is invariant under $C(x)$.
By considering $g^{-1}xg$ for suitable $g\in \G$, we may suppose that one vertex of  $E$ [respectively $S$] is $v_0$.

\noindent {\bf Case 1.}\ $E$ is invariant under $C(x)$. It follows that $E$ has endpoints $v_0$,$xv_0$ and that $x\in A\cup B$ satisfies $x^2=1$, contradicting the definition of a BM group. We therefore reduce to

\noindent {\bf Case 2.}\ $S$ is invariant under $C(x)$, where $S$ is the square illustrated in Figure \ref{square S}. Then $xv_0\ne v_0$ (since the action is free) and so $xv_0$ is one of the other three vertices of $S$. Thus $x=a$ or $x=b'$ or $x=ab$, where $a\in A$,$b,b'\in B$. If $x=a$ then $xav_0=a^2v_0$ is a vertex of $S$, which is impossible. Similarly $x\ne b'$. Thus $x=ab$. Again $x^2v_0=ababv_0$ is a vertex of $S$.
According to Lemma \ref{normal form}, the only way this can happen is if $abab=1$. However this contradicts condition (ii) in the Definition \ref{def BM group}.
This completes the proof of Theorem \ref{BMA}.
\end{proof}

\refstepcounter{picture}
\begin{figure}[htbp]
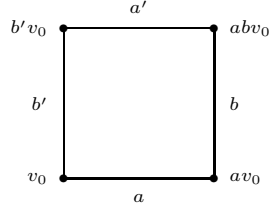
\label{square S}
\hfil
\centerline{
\beginpicture
\setcoordinatesystem units <1cm, 1cm>
\setplotarea  x from -6 to 6,  y from -1 to 1.2
\putrule from -1 -1 to 1 -1
\putrule from -1  1 to 1  1
\putrule from -1 -1 to -1 1
\putrule from  1  -1 to 1  1
\put {$_{b'}$} [r]     at   -1.2 0
\put {$_{b}$} [l]     at    1.2 0
\put {$_{a}$} [t]     at   0 -1.2
\put {$_{a'}$} [b]     at   0  1.2
\put {$_{\bullet}$} at 1 1
\put {$_{\bullet}$} at 1 -1
\put {$_{\bullet}$} at -1 1
\put {$_{\bullet}$} at -1 -1
\put {$_{av_0}$}[l] at 1.2 -1
\put {$_{b'v_0}$}[r] at -1.2 1
\put {$_{v_0}$}[r] at -1.2 -1
\put {$_{abv_0}$}[l] at 1.2  1
\endpicture
}
\hfil
\caption{The square $S$.}
\end{figure}

\begin{remark}
The fact that BM groups are torsion free is an immediate consequence of \cite[Theorem 4.13(2) p.201]{bh}. That much more general result applies to fundamental groups of spaces of non-positive curvature. They are always torsion free.
\end{remark}

\bigskip
%%%%%%%%%%%%%%%%%%%%%%%%%%%%%%%%%%%%%%%%%%%%%%%%%%%%%%%%%%%%%%%%%%%%%%%%%%%%%%%%%%%%%%%%%

\section{A $2$-dimensional subshift associated with a BM group}
\label{subshift}

Identify elements of $\G$ with vertices of $\D$. The set $\cR$ may be identified with the set of $\G$-equivalence classes of oriented basepointed squares (chambers) in $\D$.  We refer to such an equivalence class of squares as a tile. We now construct a
2-dimensional shift system associated with $\Gamma$.

The transition matrices are defined as follows.
If $r=(a,b,b',a'), s=(c,d,d',c')\in \cR$ then define horizontal and vertical transition matrices $M_1, M_2$ as indicated in Figure \ref{trans}: that is $M_j(s,r)=1$ if $r$ and $s$ represent the labels of tiles in $\D$ which lie as shown in Figure \ref{trans}, and  $M_j(s,r)=0$ otherwise. The $mn\times mn$ matrices $M_1, M_2$ are nonzero \zomatrices.

\refstepcounter{picture}
\begin{figure}[htbp]
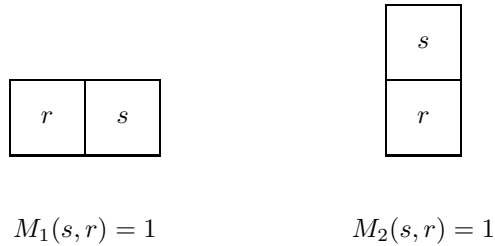
\label{trans}
\hfil
\centerline{
\beginpicture
\setcoordinatesystem units <1cm, 1cm>
\setplotarea  x from -6 to 6,  y from -1 to 2.2
\putrule from -3 0 to -1 0
\putrule from -3 1 to -1 1
\putrule from -3 0 to -3 1
\putrule from -2 0 to  -2 1
\putrule from -1 0 to  -1 1
\putrule from 2 0 to  3 0
\putrule from 2 1 to  3 1
\putrule from 2 2 to  3 2
\putrule from 2 0 to  2 2
\putrule from 3 0 to  3 2
\put {$M_1(s,r)=1$} []     at   -2 -1
\put {$M_2(s,r)=1$} at  2.5 -1
\put {$r$} []     at   -2.5 0.5
\put {$s$} at   -1.5 0.5
\put {$r$} []     at   2.5 0.5
\put {$s$} at   2.5  1.5
\endpicture
}
\hfil
\caption{Definition of the transition matrices.}
\end{figure}

It follows that $M_1(s,r)=1$ if and only if $b=d'$ and $c\ne a^{-1}$. (The condition
$c'\ne {a'}^{-1}$ is redundant, because two adjacent sides of a square uniquely determine it.)  See Figure \ref{trans1}. It follows that each row or column of $M_1$ has precisely $m-1$ nonzero entries. A diagram similar to Figure \ref{trans1} applies to vertical transition matrices, with the result that each row or column of $M_2$ has  precisely $n-1$ nonzero entries.

\refstepcounter{picture}
\begin{figure}[htbp]
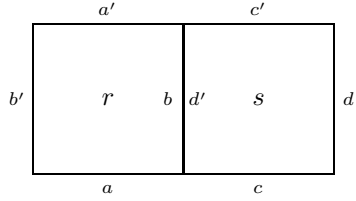
\label{trans1}
\hfil
\centerline{
\beginpicture
\setcoordinatesystem units <2cm, 2cm>
\setplotarea  x from -1.5 to 1.5,  y from -0.2 to 1.2
\putrule from -1 0 to 1 0
\putrule from -1 1 to 1 1
\putrule from -1 0 to -1 1
\putrule from 0 0 to  0 1
\putrule from 1 0 to  1 1
\put {$r$} []     at   -0.5 0.5
\put {$s$} at   0.5 0.5
\put {$_a$} at   -0.5 -0.1
\put {$_c$} at   0.5 -0.1
\put {$_{a'}$} at   -0.5 1.1
\put {$_{c'}$} at   0.5 1.1
\put {$_b$} at   -0.1 0.5
\put {$_d$} at   1.1 0.5
\put {$_{b'}$} at   -1.1 0.5
\put {$_{d'}$} at   0.1 0.5
\endpicture
}
\hfil
\caption{Definition $M_1$.}
\end{figure}

We now use $\cR$ as an alphabet and $M_1, M_2$ as transition matrices to build up two dimensional words as in \cite{rs2}.
Let $[m,n]$ denote $\{m,m+1, \dots , n\}$, where $m \le n$ are integers. If $m,n \in \ZZ^2$, say that $m \le n$ if $m_j \le n_j$ for $j=1,2$, and when $m \le n$, let  $[m,n] = [m_1,n_1] \times [m_2,n_2]$. In $\ZZ^2$, let $0$ denote the zero vector
and let $e_j$ denote the $j^{th}$ standard unit basis vector.
If $m\in \ZZ^2_+=\{m\in \ZZ^2;\ m\ge 0\}$, let
$$W_m = \{ w: [0,m] \to \cR ;\ M_j(w(l+e_j),w(l)) = 1\ \text{whenever}\ l,l+e_j \in [0,m] \}$$ and call the elements of $W_m$ words.
Let $W = \bigcup_{m\in \ZZ_+^2} W_m$.
We say that a word $w\in W_m$ has shape $\s(w)=m$, and we identify $W_0$ with $\cR$ in the natural way via the map $w \mapsto w(0)$. Define the initial and final maps $o: W_m \to \cR$ and $t: W_m \to \cR$ by $o(w) = w(0)$ and $t(w) = w(m)$.
 In order to apply the theory of \cite{rs2} we need to show that the matrices $M_1$,$M_2$ satisfy the following conditions.

\begin{description}
\item[(H0)] Each $M_i$ is a nonzero \zomatrix.

\item[(H1a)] $M_1M_2 =M_2M_1$.

\item[(H1b)] $M_1M_2$ is a \zomatrix.

\item[(H2)] The directed graph with vertices $r \in \cR$
and directed edges $(r,s)$ whenever $M_i(s,r) =1$ for some $i$, is irreducible.

\item[(H3)] For any nonzero $p\in \ZZ^2$, there exists a word $w\in W$ which is not {\em $p$-periodic}, i.e. there exists $l$ so that
$w(l)$ and $w(l+p)$ are both defined but not equal.
\end{description}

\begin{lemma}\label{M_1M_2} The matrices $M_1$, $M_2$ satisfy conditions
{\rm(H0)},{\rm(H1a)}, {\rm(H1b)}, and {\rm(H3)}.
\end{lemma}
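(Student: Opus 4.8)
The conditions (H0), (H1a), (H1b) are geometric, reflecting the structure of $\D$ as a product of trees, whereas (H3) is a combinatorial rigidity statement; I treat them in this order. Condition (H0) is immediate: each $M_i$ has entries in $\{0,1\}$ by construction, and the discussion following Figure~\ref{trans1} shows that every row and column of $M_1$ (resp. $M_2$) has exactly $m-1\ge 3$ (resp. $n-1\ge 3$) ones, so each $M_i$ is nonzero.

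My plan for (H1a) and (H1b) is to establish both at once by reading the matrix entries as counts of $2\times2$ blocks of tiles. From the definitions, $(M_1M_2)(t,r)=\sum_s M_1(t,s)M_2(s,r)$ counts the tiles $s$ for which $r,s,t$ can sit as the bottom-left, top-left and top-right tiles of a $2\times2$ arrangement, while $(M_2M_1)(t,r)$ counts the bottom-right tiles $s'$ completing the same arrangement with corners $r$ (bottom-left) and $t$ (top-right). The first step is uniqueness: writing $r=(a,b,b',a')$ and $t=(p,q,q',p')$, the edge-matching built into $M_1,M_2$ prescribes the bottom and right edges of $s$ (the top edge $a'$ of $r$ and the left edge $q'$ of $t$) and the left and top edges of $s'$ (namely $b$ and $p$). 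Since each of the four corner projections of $\cR$ onto a subproduct $A\times B$ or $B\times A$ is a bijection by Definition~\ref{def BM group}(iii), each of $s$ and $s'$ is uniquely determined; in particular every entry of $M_1M_2$ lies in $\{0,1\}$, which is (H1b).

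It remains to compare existence, and this is the step I expect to be the main obstacle. The two completions also carry the non-folding inequalities ($c\ne a^{-1}$, and its analogues) coming from the definitions of $M_1$ and $M_2$; these say precisely that the two length-two paths obtained by projecting the block to $T_1$ and to $T_2$ are reduced. The key point is that, since $\D=T_1\times T_2$ and an edge of shape $(1,0)$ leaves the $T_2$-coordinate fixed (and an edge of shape $(0,1)$ leaves the $T_1$-coordinate fixed), the reducedness of the projected $T_1$-path can be read off either edge-row of the block and that of the $T_2$-path off either edge-column, with the same outcome. Hence the non-folding conditions for the top-left completion and for the bottom-right completion are equivalent: the (unique) block exists in one description if and only if it exists in the other, so $(M_1M_2)(t,r)=(M_2M_1)(t,r)$, giving (H1a).

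Finally, for (H3) fix $0\ne p\in\ZZ^2$ and put $q=(|p_1|,|p_2|)$; a word of shape $q$ is $p$-periodic exactly when two opposite corners agree, so it suffices to produce one word of shape $q$ whose relevant corners differ. Suppose instead that every word of shape $q$ has agreeing corners, and assume $p_1\neq 0$ (if $p_1=0$ argue identically with $e_2$ and $M_2$). Take such a word $w$ and extend it by one further column in the $e_1$-direction, which is possible because $M_1$ has positive row sums; call the result $w'$, of shape $q+e_1$. Applying the hypothesis to the two shape-$q$ subwords $w'|_{[0,q]}$ and $w'|_{[e_1,\,q+e_1]}$ pins one corner tile of the newly added column to a single value determined by $w$. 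But the new column is attached alongside the fixed column $w(q_1,\cdot)$, and the unique $2\times2$-completion established above both guarantees that every admissible choice of its bottom tile extends to a full admissible column and sets up a bijection between its bottom and top tiles. Since the bottom tile can be chosen in $m-1\ge 2$ ways (the out-degree in $M_1$), the pinned corner tile would have to take at least two distinct values, a contradiction. This yields the required non-$p$-periodic word and establishes (H3).
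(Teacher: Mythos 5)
Your proof is correct, and it splits relative to the paper as follows. For (H0), (H1a), (H1b) you are essentially doing what the paper does, but with the geometry replaced by combinatorics: the paper's entire argument is the geometric assertion that in the configuration of Figure \ref{W1}, the tiles $r_o$, $r_1$, $r$ admit exactly one completing tile $r_2$ (chambers in a common apartment of the building), whereas you derive uniqueness of the intermediate tile from the bijectivity of the four corner projections in Definition \ref{def BM group}(iii), and you prove the existence half (that the two completions succeed or fail together) by observing that all the non-folding inequalities just express reducedness of the two projected tree paths, readable off any row or column. This in effect supplies a combinatorial proof of the paper's one-line geometric claim, and it also makes rigorous the paper's parenthetical redundancy remark that two adjacent sides determine a square. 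For (H3) your route is genuinely different: the paper directly \emph{constructs} a non-$p$-periodic word by extending step by step along a shortest path from $l$ to $l+p$, with at least two admissible choices at each step and unique completion to full rectangles (Figure \ref{extension}); you instead argue by contradiction, showing that if every word of shape $q$ were $p$-periodic, appending one column would pin a corner tile to a single value, contradicting the $m-1\ge 2$ choices of bottom tile together with the bottom-to-top bijection along the new column. Both arguments rest on the same two pillars (row and column sums $m-1$, $n-1\ge 3$, and unique rectangle completion), but your pinning argument is a clean pigeonhole repackaging of the paper's construction. Two implicit steps deserve a sentence each: you need $W_q\neq\emptyset$ when you ``take such a word $w$'' (this follows from the same column-extension mechanism, starting from any single tile), and your claim that every admissible bottom tile extends to a full admissible column uses existence of the completion of an L-shaped triple of tiles, which is slightly more than the diagonal-pair statement you proved --- though it follows by the same corner-projection argument, or by counting from (H1a), (H1b) and the constant row sums; the paper glosses this point at exactly the same level when it writes ``By conditions (H1a) and (H1b), there is a unique choice of $w(m-e_1+e_2)$.''
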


\begin{proof}
{\bf (H0):}
By definition $M_1$ and~$M_2$ are $\{0,1\}$-matrices and they are clearly nonzero.

\noindent{\bf (H1a,b):}
Consider the configuration of
Figure~\ref{W1} consisting of chambers lying in some apartment of the building.  Given the tiles $r_o$, $r_1$, and~$r$,
there is exactly one tile~$r_2$ which completes the picture. Therefore if $(M_2M_1)(r,r_o)>0$ then
$(M_1M_2)(r,r_o)=1$.  Likewise, if $(M_1M_2)(r,r_o)>0$, then
$(M_2M_1)(r,r_o)=1$.  Conditions~(H1a) and~(H1b) follow.

\refstepcounter{picture}
\begin{figure}[htbp]\label{W1}
\hfil
\centerline{
\beginpicture
\setcoordinatesystem units <1cm, 1cm>
\setplotarea  x from -1 to 1,  y from -1 to 1
\putrule from -1 -1 to 1 -1
\putrule from -1 0 to 1 0
\putrule from -1 1 to 1 1
\putrule from -1 -1 to  -1 1
\putrule from 0 -1 to  0 1
\putrule from 1 -1 to  1 1
\put {$r_o$}      at   -0.5 -0.5
\put {$r_1$}      at   0.5  -0.5
\put {$r_2$}      at   -0.5  0.5
\put {$r$}      at   0.5 0.5
\endpicture
}
\hfil
\caption{A word of shape $(1,1)$.}
\end{figure}

\noindent{\bf (H3):}
Fix any nonzero $p\in\ZZ^2$. Choose $m\in \ZZ^2_+$ large enough that the rectangle
$[0,m]$ contains a point $l$ and its $p$-translate $l+p$.
We can construct $w\in W_m$ which is not
$p$-periodic, as follows.

Let $w(l)\in \cR$ be chosen arbitrarily. Now for $j=1,2$ there are at least two choices
of $r\in\cR$ such that $M_j(r,w(l))=1$ [respectively $M_j(w(l),r)=1$].
Thus one can begin to extend the domain of definition of $w$ in any one of four directions
so that there are at least two choices of $w(l\pm e_j)$, $j=1,2$.
By induction, one can extend $w$ in many ways to an element of $W_m$, at each step choosing a particular direction for the extension. In order to do this,
first choose arbitrarily a shortest path from $l$ to $l+p$, and then extend step by step along the path.
It is important to note that at each step, $w$ extends uniquely to be defined on a complete rectangle in $\ZZ^2$,
as illustrated in Figure \ref{extension}. In that Figure, we assume that $w$ is defined on
the rectangle $[l,m]$, and then define $w(m+e_2)=r$, where $M_2(r,w(m)) =1$.
By conditions (H1a) and (H1b), there is a unique choice of $w(m-e_1+e_2)$ which is
compatible with the values of $w(m+e_2)$ and $w(m-e_1)$. Continue the process inductively until
$w$ is defined uniquely on the whole rectangle $[l,m+e_2]$.

\refstepcounter{picture}
\begin{figure}[htbp]\label{extension}
\hfil
\centerline{
\beginpicture
\setcoordinatesystem units <1cm, 1cm>
\setplotarea  x from -6 to 6,  y from -1 to 2.2
\putrule from -3 -1 to 2 -1
\putrule from -3 1  to 2  1
\putrule from  -3 -1  to -3 1
\putrule from  2 -1  to 2 2
\setdashes
\putrule from  -3 2 to  2 2
\putrule from  -3 2 to  -3 1
\putrule from  1 2 to  1 1
\put {$[l,m]$}      at   -0.5  0
\put {$_m$} [l]     at   2.2 1
\put {$_l$} [r]     at   -3.2 -1
\put {$_{m+e_2}$}[l]      at   2.2 2
\put {$_{m-e_1}$} [t]     at   1  0.8
\put {$_{m-e_1+e_2}$} [b]     at   1  2.2
\put {$\bullet$} at 2 1
\put {$\bullet$} at 2 2
\put {$\bullet$} at 1 1
\put {$\bullet$} at 1 2
\put {$\bullet$} at -3 -1
\endpicture
}
\hfil
\caption{$w(m+e_2)$ determines $w$ on the rectangle $[l, m+e_2]$.}
\end{figure}

Figure \ref{path} illustrates how $w$ is defined on $[l, l+p]$ (where $p=(5,3)$), by moving along a certain path from $l$ to $l+p$.  The values of $w$ on this path are given by a sequence of tiles.
The values of $w$ up to a certain point on the path determine the values on a rectangle
which contains the corresponding initial segment of the path.  At the end of the process, the values of $w$ on the path have completely determined the values on $[l, l+p]$.
The final extension from $[l, l+p]$ to the complete rectangle $[0,m]$ is done similarly.

\refstepcounter{picture}
\begin{figure}[htbp]\label{path}
\hfil
\beginpicture
\setcoordinatesystem units <0.7 cm, 0.7cm>
\setplotarea  x from -6 to 6,  y from -2 to 2
\putrectangle corners at -3 -2 and -2 -1
\putrectangle corners at -2 -2 and -1 -1
\putrectangle corners at -1 -2 and 0 -1
\putrectangle corners at 0 -2 and 1 -1
\putrectangle corners at 0 -1 and 1  0
\putrectangle corners at 0 0 and 1 1
\putrectangle corners at 1 1 and 2 1
\putrectangle corners at 1 0 and 2 1
\putrectangle corners at 1 1 and 2 2
\putrectangle corners at 2 1 and 3 2
\setdashes
\putrectangle corners at -3 -2 and 3 2
\put{$_{w(l)}$} at -2.5  -1.5
\put{$_{w(p)}$} at 2.5  1.5
\endpicture
\hfil
\caption{Definition of $w$ on the rectangle $[l,l+p]$.}
\end{figure}

At each step, there are at least two different choices for the extension in any direction for which
$w$ is not already defined.
In particular, one can ensure that $w(l+p)\ne w(l)$. Therefore $w$ is not $p$-periodic.

\end{proof}

\begin{lemma}\label{L2}
Consider the directed graph which has a vertex for each $r \in \cR$
and a directed edge from $r$ to $s$ for each $i$ such that $M_i(s,r) =1$.
This graph is irreducible. i.e. Condition {\rm(H2)} holds.
\end{lemma}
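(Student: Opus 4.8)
The plan is to prove strong connectivity by a geometric argument in the building~$\D$, exploiting the fact that a directed edge $r\to s$ records that a chamber of class~$s$ sits immediately to the East (when $M_1(s,r)=1$) or North (when $M_2(s,r)=1$) of a chamber of class~$r$. A directed path from~$r$ to~$s$ is therefore nothing but a monotone north-east staircase of chambers whose consecutive members share an edge. Consequently it suffices to produce, for any two tiles $r,s\in\cR$, a single rectangular word $w\in W_M$ of some shape $M=(p,q)$ with $o(w)=r$ and $t(w)=s$: the staircase $w(0)\to w(e_1)\to\cdots\to w(pe_1)\to w(pe_1+e_2)\to\cdots\to w(M)$ is then the desired directed path, since $M_j(w(l+e_j),w(l))=1$ at every step by the definition of~$W_M$.

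Such a word is supplied by a convex hull. Recall (Figure~\ref{convex hull}) that any two vertices $u,v\in\cV$ lie in a common apartment and that their convex hull is a rectangular grid of chambers of shape $\sh(u,v)$. Because the edges along each row, respectively column, of this grid lie on a single geodesic of~$T_1$, respectively~$T_2$, the non-backtracking condition ($c\ne a^{-1}$) built into the transition matrices is automatic, so the grid is a genuine element of $W_{\sh(u,v)}$. Its south-west corner chamber is determined by the first edges of the two geodesics issuing from~$u$, and its north-east corner chamber by the last edges entering~$v$; it therefore only remains to choose $u$ and $v$ so that these two corner chambers carry the classes $r$ and $s$.

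To arrange this I would fix the base vertex~$v_0$ and let $C_r$ be the based chamber of class~$r$ with south-west vertex~$v_0$, whose two edges at~$v_0$ point into a distinguished quadrant (one neighbour in~$T_1$, one in~$T_2$). Choosing any chamber $C_s$ of class~$s$ and using that $\G$ acts transitively on~$\cV$, I would pick $g\in\G$ carrying the south-west vertex of $C_s$ to a vertex~$w$ lying deep inside that quadrant of~$v_0$. Then $gC_s$ is still of class~$s$, since the class is $\G$-invariant, and lies strictly to the north-east of~$C_r$. Taking $u=v_0$ and letting $v$ be the north-east vertex of $gC_s$, the two geodesics from~$u$ to~$v$ begin with the edges of~$C_r$ and terminate in the edges of~$gC_s$; hence the convex-hull grid has $C_r$ as its south-west corner and $gC_s$ as its north-east corner, giving a word with $o(w)=r$ and $t(w)=s$.

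The main obstacle is exactly this directionality: the edges of the graph run only north-east, so connectedness of~$\D$ by itself is not enough, and one must reposition a representative of~$s$ strictly to the north-east of a representative of~$r$ inside one apartment. Transitivity of the $\G$-action is what makes this possible. The only delicate bookkeeping lies in the basepoint-and-orientation conventions, namely verifying that the south-west and north-east corner chambers of the grid really do carry the classes $r$ and~$s$; once the quadrant containing~$w$ is chosen to match the edges of~$C_r$, and $w$ is taken far enough out that both coordinates of $\sh(v_0,v)$ are at least~$1$, this is forced.
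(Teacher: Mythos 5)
Your reduction is fine as far as it goes: a directed path in the graph is indeed a north-east staircase, and the convex hull $\conv(u,v)$ of two vertices does yield a genuine word of shape $\sh(u,v)$, so it would suffice to exhibit one rectangle whose south-west corner chamber has class $r_o$ and whose north-east corner chamber has class $r_t$. The gap is in the positioning step. Because $\G$ acts \emph{simply} transitively on vertices, each vertex $w$ of $\D$ carries exactly one based chamber of class $s$: once you decide where the south-west vertex of $C_s$ goes, the element $g$ is unique, and the orientation of $gC_s$ at $w$ is entirely out of your control. Placing $w$ ``deep inside the quadrant matching $C_r$'' does control the corner chamber of $\conv(v_0,v)$ at $v_0$, but it says nothing about whether the class-$s$ chamber based at $w$ opens to the north-east, i.e.\ whether its opposite vertex $v$ satisfies $\sh(v_0,v)=\sh(v_0,w)+(1,1)$; that depends on the terminal letters of the normal forms of $w$ (Lemma \ref{normal form}), which mere membership in a quadrant does not constrain. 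So the sentence ``this is forced'' asserts exactly what has to be proved; transitivity alone does not force it.

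That this is a genuine gap and not bookkeeping is shown by the fact that your argument never uses $m,n\ge 4$, whereas the lemma fails without a size hypothesis. Take both trees of degree two and $\G=\ZZ^2=\langle a,b\mid ab=ba\rangle$ acting simply transitively on the square tessellation of the plane: the datum satisfies every condition of Definition \ref{def BM group} except $m,n\ge4$ (here $\cR$ consists of the four tiles $(x,y,y,x)$), all your ingredients --- apartments, convex hulls, simple transitivity --- are available, yet each tile's unique east and north successor is itself, so the digraph is four self-loops and (H2) fails; concretely, the chamber of class $(a^{-1},b,b,a^{-1})$ based at any $w$ always opens to the west, and no repositioning fixes that. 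The paper closes exactly this hole quantitatively: the set $S_+(r_o)$ of tiles reachable from $r_o$ by a word of shape $(1,1)$, and the set $S_-(r_t)$ of tiles from which $r_t$ is so reachable, each have $(m-1)(n-1)$ elements because each row and column of $M_1$ (resp.\ $M_2$) has $m-1$ (resp.\ $n-1$) nonzero entries and $M_2M_1$ is a $\{0,1\}$-matrix; since $m,n\ge 4$ gives $2(m-1)(n-1)>mn=|\cR|$, the two sets meet, producing a directed path $r_o\to r_t$ of length four. To repair your proof you would have to show that some vertex $w$ exists whose normal forms begin with the edge labels of $C_r$ and end in letters compatible with the class-$s$ chamber at $w$ opening north-east --- and that existence statement is again a counting argument resting on property (iii) of Definition \ref{def BM group}, not a consequence of transitivity.
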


\begin{proof}
Given $r_o, r_t \in \cR$ we need to find a directed path starting at $r_o$ and ending at $r_t$.

There are $m-1$ letters $r_1\in \cR$ such that $M_1(r_1,r_o)=1$. For each such $r_1$ there are $n-1$ letters $r\in \cR$ such that $M_2(r,r_1)=1$. Since $M_2M_1$ is a \zomatrix (equivalently the paths $r_o \to r_1 \to r$ are distinct) it follows that the set
$$S_+(r_o)= \{ r \in \cR \st w(0,0)=r_o \text{ and } w(1,1)= r \text{ for some } w\in W_{(1,1)} \}$$
contains $(m-1)(n-1)$ elements. See Figure \ref{W1}.
Similarly the set
$$S_-(r_t)= \{ r \in \cR \st w(0,0)=r \text{ and } w(1,1)= r_t \text{ for some } w\in W_{(1,1)} \}$$
contains $(m-1)(n-1)$ elements.
Since $m,n \ge 4$, we have  $|\cR | = mn < 2(m-1)(n-1)$ and so there exists $r\in S_+(r_o)
\cap S_-(r_t)$. It follows that there is a directed path from $r_o$ (to $r$) to $r_t$, as required.
\end{proof}

Associated with the 2-dimensional shift system constructed above there is a finitely
generated abelian group defined as follows.
The block $mn\times 2mn$ matrix $(I-M_1,I-M_2)$ defines a homomorphism
$\ZZ^{\cR}\oplus \ZZ^{\cR} \to \ZZ^{\cR}$. Define
$C=\coker\begin{smallmatrix}(I-M_1,&I-M_2)\end{smallmatrix}$. Thus $C$ can
be defined as an abelian group, in terms of generators and relations:
\[
C=C(\G)=
\langle r\in\cR ; \;  r=\sum_sM_j(s,r)s,\ j=1,2 \rangle
\]
As we shall see, this group plays an important role
in classifying the $C^*$-algebra $\cA(\G)$ which is studied in the next section.
The next observation will be needed there.

\begin{lemma}\label{Mtranspose} There exists a permutation matrix $P : \ZZ^{\cR} \to \ZZ^{\cR}$
 such that $P^2=I$ and
\[
PM_jP=M_j^t, \qquad j=1,2.
\]
In particular $\coker\begin{smallmatrix}(I-M_1,&I-M_2)\end{smallmatrix} =
\coker\begin{smallmatrix}(I-M_1^t,&I-M_2^t)\end{smallmatrix}$.
\end{lemma}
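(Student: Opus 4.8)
I would produce the matrix $P$ from the natural order-two symmetry of a tile. Define $\rho:\cR\to\cR$ by
\[
\rho(a,b,b',a')=({a'}^{-1},{b'}^{-1},b^{-1},a^{-1}).
\]
Condition (i) of Definition~\ref{def BM group} guarantees that this $4$-tuple again belongs to $\cR$, so $\rho$ is well defined, and substituting shows at once that $\rho^2=\mathrm{id}$. Let $P$ be the associated permutation matrix on $\ZZ^{\cR}$, with $(s,r)$-entry equal to $1$ precisely when $s=\rho(r)$; then $P^2=I$. Geometrically $\rho$ is the passage from a based chamber to the one based at the diagonally opposite vertex, i.e. the $180^\circ$ rotation of the square, which reverses both the horizontal and the vertical orientation.

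\textbf{The key identities.} The statement $PM_jP=M_j^t$ is equivalent to
\[
M_j(\rho(s),\rho(r))=M_j(r,s)\qquad(j=1,2),
\]
and I would prove these geometrically. By definition $M_j(s,r)=1$ exactly when $r$ and $s$ are the labels of two chambers sitting side by side in an apartment of $\D$, with $s$ in the positive $j$-th direction from $r$ (this placement encodes both the shared-edge condition and non-backtracking). Rotating that apartment through $180^\circ$ about the midpoint of the shared edge is an automorphism of the squared plane: it interchanges the two chambers' positions, reverses the $j$-th direction, and rotates each individual square, hence carries the labels $r,s$ to $\rho(r),\rho(s)$. The rotated configuration thus places $\rho(r)$ in the positive $j$-th direction from $\rho(s)$, so $M_j(s,r)=1$ if and only if $M_j(\rho(r),\rho(s))=1$ (the converse by applying $\rho$ again); interchanging $r$ and $s$ gives the displayed identity, and the argument is identical for $j=1$ and $j=2$, so one $P$ serves both. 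Equivalently one may verify the identities directly from the edge-matching rules, using that once the shared edge is fixed the two non-backtracking conditions are equivalent, as in the remark following Figure~\ref{trans1}.

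\textbf{The cokernel statement.} Since $P^2=I$, the identities $PM_jP=M_j^t$ give $I-M_j^t=P(I-M_j)P$ for $j=1,2$. Writing $\phi=(I-M_1,\,I-M_2)$ and $\psi=(I-M_1^t,\,I-M_2^t)$ for the two maps $\ZZ^{\cR}\oplus\ZZ^{\cR}\to\ZZ^{\cR}$, and setting $Q=P\oplus P$, one checks that $\psi=P\,\phi\,Q$. As $P$ and $Q$ are automorphisms, $\im\psi=P(\im\phi)$, so $P$ induces an isomorphism $\coker\phi\cong\coker\psi$, which is the asserted equality of cokernels.

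\textbf{Main obstacle.} The only real difficulty is the middle step: recognising that neither single reflection of a chamber reverses both adjacency relations simultaneously, so the correct choice is the composite $180^\circ$ rotation $\rho$, and then checking that the non-backtracking conditions transform correctly — this is precisely where the bijectivity hypothesis (iii) enters.
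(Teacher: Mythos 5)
Your proof is correct and takes essentially the same route as the paper: the paper's proof uses exactly the same involution $p(a,b,b',a')=({a'}^{-1},{b'}^{-1},b^{-1},a^{-1})$ (the rotation of the square through $\pi$) and the same key identity $M_j(s,r)=M_j(p(r),p(s))$, merely asserting it without the verification you give. The details you add are precisely what the paper leaves implicit — the equivalence of the two non-backtracking conditions is the paper's remark that $c'\ne {a'}^{-1}$ is redundant (a consequence of Definition~\ref{def BM group}(i) and (iii), as you correctly identify), and your explicit deduction of the cokernel isomorphism via $\psi = P\,\phi\,(P\oplus P)$ is the routine step the paper omits.
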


\begin{proof}
Define $p: \cR \to \cR$ by $p((a,b,b',a'))=({a'}^{-1},{b'}^{-1},b^{-1},a^{-1})$. (This corresponds to a rotation of the square in Figure \ref{geometric square}
through the angle $\pi$.)
Then $M_j(s,r)=1$ if and only if $M_j(p(r),p(s))=1$. That is $M_j(s,r)=M_j(p(r),p(s))$.
Let $P : \ZZ^{\cR} \to \ZZ^{\cR}$ be the corresponding permutation matrix defined by
$Pe_{p(r)}=e_r$, where $\{e_r\st r\in \cR\}$ is the standard basis of $\ZZ^{\cR}$.
\end{proof}

\bigskip

\section{The boundary action.}

 A {\bf sector} in $\D$ is a $\frac{\pi}{2}$-angled sector in some apartment.
Two sectors are  {\bf equivalent} (or parallel) if their intersection contains a sector. See Figure \ref{equivalent sectors}, where the equivalent sectors with base vertices $x$, $x'$  do not necessarily lie in a common apartment, but the shaded subsector is contained in them both.

\refstepcounter{picture}
\begin{figure}[htbp]
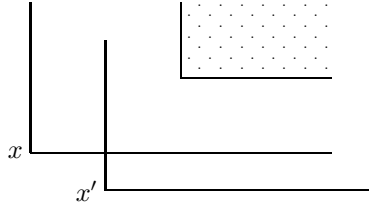
\label{equivalent sectors}
\hfil
\centerline{
\beginpicture
\setcoordinatesystem units <1cm, 1cm>
\setplotarea  x from -6 to 6,  y from -1 to 2.5
\putrule from 0 0 to 4  0
\putrule from 0 0  to 0 2
\putrule from 1 -0.5 to 4.5  -0.5
\putrule from 1 -0.5  to 1 1.5
\putrule from 2  1 to 4  1
\putrule from 2 1    to 2  2
\put {$x$} [r]     at   -0.1 0
\put {$x'$} [r]     at    0.9 -0.5
\setshadegrid span <4pt>
\vshade 2 1 2  4 1 2 /
\endpicture
}
\hfil
\caption{Equivalent sectors containing a common subsector.}
\end{figure}

The boundary $\Om$ of $\D$ is defined to be the set of equivalence classes of sectors in $\D$.  Fix a vertex $x$. For any $\w \in \Om$ there is a unique sector $[x,\w)$ in the
class $\w$ having base vertex $x$, as illustrated in Figure \ref{repsec} \cite[Theorem 9.6]{ron}.

\refstepcounter{picture}
\begin{figure}[htbp]
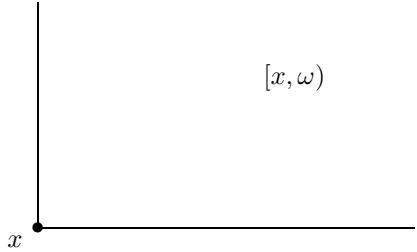
\label{repsec}
\hfil
\centerline{
\beginpicture
\setcoordinatesystem units <1cm, 1cm>
\setplotarea  x from -6 to 6,  y from -1 to 2.2
\putrule from -3 -1 to 2 -1
\putrule from -3 -1 to -3 2
\put {$\bullet$} at -3 -1
\put {$x$}[t,r] at -3.2 -1.1
\put {$[x, \omega)$} [l]     at   0 1
\endpicture
}
\hfil
\caption{A representative sector $[x, \omega)$.}
\end{figure}

$\Om$ is a totally disconnected compact Hausdorff space with a basis for the
topology given by sets of the form
$$
\Om(v) = \left \{ \w \in \Om : [x,\w) \hbox { contains } v \right \}
$$
where $v$ is any fixed vertex of $\D$. It is easy to see that $\Om$ is (non canonically) homeomorphic to $\partial T_1 \times \partial T_2$.

Recall from Section \ref{subshift} that the alphabet $\cR$ is identified with the set of $\G$-equivalence classes of basepointed chambers in $\D$. We refer to such an equivalence class as a tile.
Each tile has a unique representative labelled square based at a fixed vertex of $\D$, where each edge label is a generator of $\G$.

For each vertex $y\in\D$ the convex hull $\conv(x,y)$ is a rectangle $R$ in some apartment.
Associated with the rectangle $R$ there is therefore a unique word $w\in W$ defined by the labellings of the
constituent squares of $R$.
Conversely, by construction of the BM group, every word $w\in W$ arises in this way.
There is thus a natural bijection between the set of rectangles $R$ in $\D$ based at $x$ and  the set of words $w\in W$.
Denote by $\ff(w)$ the basepointed final chamber (square) in the rectangle $R$. Thus $\ff(w)$ has edge labelling corresponding to $t(w)\in \cR$. The square $\ff(w)$ is oriented, with basepoint chosen to be the vertex closest to the origin $x$.
It is worth recalling that the terminology has been set up so that $\cR=W_{(0,0)}$.
That is, tiles are words of shape $(0,0)$.

If $w\in W$, denote by $\Om(w)$ the set of all $\omega\in \Omega$ such that the sector
$[x, \omega)$ contains the rectangle in $\D$ based at $x$, corresponding to the word $w$.
Denote by $\id _{\Om(w)}$ the indicator function of this set.
It is clear from the definition of the topology on $\Omega$ that $\id _{\Om(w)}\in C(\Om)$.

\refstepcounter{picture}
\begin{figure}[htbp]
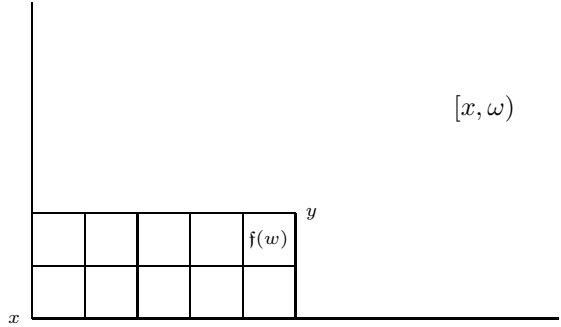
\label{word}
\hfil
\centerline{
\beginpicture
\setcoordinatesystem units <0.7cm, 0.7cm>
\setplotarea  x from -6 to 6,  y from -1 to 5
\putrule from -3 -1 to 7 -1
\putrule from -3 0 to 2 0
\putrule from -3 1  to 2  1
\putrule from  -3 -1  to -3 5
\putrule from  -2 -1  to -2 1
\putrule from  -1 -1  to -1 1
\putrule from  0 -1  to -0 1
\putrule from  1 -1  to 1 1
\putrule from  2 -1  to 2 1
\put {$_y$} [l]     at   2.2 1
\put {$_x$} [r]     at   -3.2 -1
\put {${[x, \omega)}$} [l]     at   5 3
\put {$_{{\mathfrak f}(w)}$}     at   1.5 0.5
\endpicture
}
\hfil
\caption{The rectangle $R=\conv(x,y)$ associated to a word $w\in W_{(4,1)}$,
and the sector $[x, \omega)$ representing a boundary point  $\omega\in\Om(w)$. }
\end{figure}

The group $\G$ acts on $\Om$ and hence on $C(\Om)$ via $\g\mapsto \alpha_{\g}$, where
$\alpha_{\g} f(\w)=f(\g^{-1}\w)$, for $f\in C(\Om)$, $\g\in\G$.
The algebraic crossed product relative to this action is the $*$-algebra $k(\G,C(\Om))$
of functions $\phi : \G \to C(\Om)$ of finite support, with multiplication and involution given by
\[
\phi * \psi (\g_0) = \displaystyle \sum_{\g\in\G}\phi(\g)\alpha_{\g}(\psi(\g^{-1}\g_0))
\quad \text{and} \quad
\phi^*(\g)=\alpha_{\g}(\phi(\g^{-1})^*).
\]

The full crossed product algebra $C(\Omega) \rtimes \G$ is the completion of the algebraic crossed product in an appropriate norm.  There is a natural embedding of $C(\Omega)$ into $C(\Omega) \rtimes \G$ which maps $f\in C(\Omega)$ to the function taking the value $f$ at the identity of $\G$ and $0$ elsewhere.
The identity element $\id$ of $C(\Omega) \rtimes \G$ is then identified with the constant function $\id(\w)=1,\, \w\in\Omega$.
There is a natural unitary representation $\pi : \G \to C(\Omega) \rtimes \G$, where $\pi(\g)$ is the function taking the value $\id$ at $\g$ and $0$ otherwise.
It is convenient to denote $\pi(\g)$ simply by $\g$. Thus a typical element of the dense $*$-algebra $k(\G,C(\Omega))$ can be written as a finite sum $\sum_{\g} f_{\g} \g$, where $f_{\g}\in C(\Omega)$, $\g\in\G$.
The definition of the multiplication implies the covariance relation
\begin{equation}\label{covar}
\alpha_{\g}(f) = \g f \g^{-1} \quad \text{for} \quad f \in C(\Omega), \g \in \Gamma.
\end{equation}

\begin{theorem}\label{main01}
Let $\cA(\G) = C(\Omega) \rtimes \Gamma$. Then $\cA(\G)$ is isomorphic to the rank-2 Cuntz-Krieger algebra $\cA$ associated with the alphabet $\cR$ and transition matrices $M_1, M_2$, as described in \cite{rs2}.
\end{theorem}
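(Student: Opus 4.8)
The plan is to exhibit an explicit isomorphism between $\cA(\G) = C(\Omega)\rtimes\G$ and the rank-2 Cuntz--Krieger algebra $\cA$ by constructing, inside $\cA(\G)$, a family of partial isometries satisfying the defining relations of $\cA$ given in \cite{rs2}. The natural candidates for generators come from the geometry already set up: for each word $w\in W$ we have the projection $\id_{\Om(w)}\in C(\Om)\subset\cA(\G)$, and for each $w$ whose terminal tile is $t(w)\in\cR$ we can form an element $s_w = \g_w\,\id_{\Om(w)}$, where $\g_w\in\G$ is the group element carrying the basepoint $x$ to the basepoint of the final chamber $\ff(w)$. First I would verify that each $s_w$ is a partial isometry: using the covariance relation (\ref{covar}) one computes $s_w^* s_w$ and $s_w s_w^*$ as indicator-function projections in $C(\Om)$, the ranges being $\Om(w)$ and a shifted cylinder set.

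The key structural step is to identify how these partial isometries multiply and how the projections $\id_{\Om(w)}$ decompose. Here the combinatorics of the transition matrices enters decisively. Because every word extends, and because of the uniqueness built into Lemma~\ref{M_1M_2} (conditions (H1a), (H1b)) and the word-completion mechanism illustrated in Figures~\ref{W1} and \ref{extension}, a cylinder projection $\id_{\Om(w)}$ splits as a sum $\sum \id_{\Om(w')}$ over one-step extensions $w'$ of $w$ in a given direction, the admissible extensions being exactly those permitted by $M_1$ or $M_2$. This is precisely the Cuntz--Krieger relation
\[
\id_{\Om(w)} = \sum_{s}\, M_j(s,t(w))\, \id_{\Om(w')},
\]
and the commuting-range structure coming from (H1a,b) yields the compatibility of the two directions that characterizes the rank-2 algebra. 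I would then check that the family $\{s_w\}$ generates a dense $*$-subalgebra of $\cA(\G)$: since $C(\Om)$ is generated as a $C^*$-algebra by the indicator functions $\id_{\Om(w)}$, and the $\G$-translates of these together with $\pi(\G)$ span a dense subalgebra by definition of the crossed product, it suffices to observe that every $\g\in\G$ and every basic projection is a finite combination of the $s_w$ and $s_w^*$.

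Having matched the relations, I would invoke the uniqueness theorem for rank-2 Cuntz--Krieger algebras from \cite{rs2}: the algebra $\cA$ is the \emph{universal} $C^*$-algebra on generators indexed by $\cR$ subject to those relations, and under hypotheses (H0)--(H3) — which are verified in Lemmas~\ref{M_1M_2} and \ref{L2} — it is simple and purely infinite, hence \emph{any} representation by nonzero partial isometries satisfying the relations is faithful. Thus the relation-preserving map $\cA\to\cA(\G)$ sending the canonical generators to the $s_w$ (for $w$ of shape $0$, i.e.\ $w\in\cR$) is injective, and density of its image makes it an isomorphism.

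The main obstacle I anticipate is the bookkeeping in the multiplication step: verifying that $s_w s_{w'}$ either vanishes or equals $s_{w''}$ for the concatenated word, and that the two families indexed by the horizontal and vertical directions intertwine correctly, requires careful tracking of basepoints and orientations of the chambers $\ff(w)$, together with repeated use of the shape-decomposition Lemma~\ref{shape decomposition for vertices} and the normal-form Lemma~\ref{normal form} to ensure that group elements compose as the geometry dictates. The conditions (H1a) and (H1b) are exactly what guarantee this is well defined, but translating the planar picture of Figure~\ref{extension} into the algebraic identity, uniformly in shape, is where the real work lies.
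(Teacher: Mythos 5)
Your proposal follows essentially the same route as the paper: the paper defines $\phi(s_{u,v})=\g\,\id_{\Om(v)}=\id_{\Om(u)}\g$, where $\g\in\G$ is the unique element with $\g\ff(v)=\ff(u)$ (your $\g_u\g_v^{-1}$), notes that these operators satisfy the relations (\ref{rel1*}), deduces injectivity from simplicity of $\cA$ \cite[Theorem 5.9]{rs2}, and proves surjectivity exactly as you indicate, via $\id_{\Om(w)}=\phi(s_{w,w})$ spanning a dense subalgebra of $C(\Om)$ together with the decomposition $a=\sum_{w\in W_{(1,0)}}a\,\id_{\Om(w)}\in\phi(\cA)$ for each generator $a\in A\cup B$. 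The only slip is notational: the canonical generators of $\cA$ are the $s_{u,v}$ indexed by \emph{pairs} of words with $t(u)=t(v)$, not by tiles of shape $0$, so the relation-preserving map must be specified on all such pairs --- which your family $\{s_w\}$ does supply, via $s_{u,v}\mapsto s_u s_v^*$.
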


The proof of this result is essentially the same as that given in \cite[Section 7]{rs2},
in the case of a group of automorphisms of a building of type $\tilde A_2$.

Here is how the isomorphism is defined.
The $C^*$-algebra $\cA$ is defined as the universal $C^*$-algebra
generated by a family of partial isometries
$\{s_{u,v};\ u,v \in W \ \text{and} \ t(u) = t(v) \}$
satisfying the relations
\begin{subequations}\label{rel1*}
\begin{eqnarray}
{s_{u,v}}^* &=& s_{v,u} \label{rel1a*}\\
s_{u,v}s_{v,w}&=&s_{u,w} \label{rel1b*}\\
s_{u,v}&=&\displaystyle\sum_
{\substack{w\in W;\s(w)=e_j,\\
o(w)=t(u)=t(v)}}
s_{uw,vw} ,\ \text{for} \ 1 \le j \le r
\label{rel1c*}\\
s_{u,u}s_{v,v}&=&0 ,\ \text{for} \ u,v \in  W_0, u \ne v. \label{rel1d*}
\end{eqnarray}
\end{subequations}

We refer to \cite[Section 1]{rs2} for details, in particular for the meaning of the product of words used in
(\ref{rel1c*}).

The isomorphism $\phi: \cA \to C(\Om)\rtimes \G$ is defined as follows.

If $u, v \in W$ with $t(u)=t(v)\in \cR$, let $\g \in \G$ be the unique element such that $\g \ff(v)=\ff(u)$.
The condition $t(u)=t(v)$ means that $\ff(u)$, $\ff(v)$ lie in the same $\Gamma$-orbit, so that $\gamma$ exists.
Moreover $\gamma$ is unique, since $\Gamma$ acts freely on $\D$.
Now define
\begin{equation}\label{isomorphism}
\phi(s_{u,v})=\g \id _{\Om(v)}= \id _{\Om(u)}\g.
\end{equation}

The proof of that $\phi$ is an isomorphism is exactly the same as the corresponding
result for $\tilde A_2$ groups given in \cite[Section 7]{rs2}. Here are the essential details.

The equation (\ref{isomorphism}) does define a $*$-homomorphism of $\cA$ because the operators of the form
$\phi(s_{u,v})$ are easily seen to satisfy the relations (\ref{rel1*}).
Since the algebra $\cA$ is simple \cite[Theorem 5.9]{rs2}, $\phi$ is injective.
Now observe that $\id _{\Om(w)}= \phi(s_{w,w})$.
It follows that the range of $\phi$ contains $C(\Om)$. For the sets $\Om(w)$, $w\in W$,  form a basis for the topology of $\Om$,
and so the linear span of $\{ \id _{\Om(w)} ; w\in W\}$ is dense in $C(\Om)$.
To show that $\phi$ is surjective, it therefore suffices to show that the range of $\phi$ contains $\G$.
It is clearly enough to show that $\phi(\cA)$ contains
the generating set $A\cup B$ for $\G$.

Suppose that $a\in A$. Then
\[
a=a.\id=\sum_{w\in W_{(1,0)}}a\id _{\Om(w)} \in \phi(\cA).
\]
Similarly $B\subset \phi(\cA)$.
\qed

\bigskip

In view of Lemmas \ref{M_1M_2}, \ref{L2}, the following is an immediate consequence of \cite[Proposition 5.11, Theorem 5.9, Corollary 6.4 and Remark 6.5]{rs2}.

\begin{theorem}\label{main1} The $C^*$-algebra $\cA(\G)$ is purely infinite, simple and nuclear.
Moreover it satisfies the Universal Coefficient Theorem.
\end{theorem}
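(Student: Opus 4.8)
The plan is to reduce everything to the general theory of rank-2 Cuntz--Krieger algebras developed in \cite{rs2}, using the structural identification already established. First I would observe that the hypotheses needed to apply that theory have all been verified above: Lemma \ref{M_1M_2} establishes conditions (H0), (H1a), (H1b) and (H3) for the pair of transition matrices $M_1, M_2$, while Lemma \ref{L2} establishes the irreducibility condition (H2). Thus the alphabet $\cR$ together with $M_1$ and $M_2$ satisfies the complete list of hypotheses (H0)--(H3) under which the results of \cite{rs2} are stated.

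Next I would invoke Theorem \ref{main01}, which supplies an explicit isomorphism $\phi : \cA \to \cA(\G)$ between the rank-2 Cuntz--Krieger algebra $\cA$ attached to $(\cR, M_1, M_2)$ and the crossed product $\cA(\G) = C(\Omega) \rtimes \G$. Since a $C^*$-algebra isomorphism preserves each of the four properties in question---simplicity, pure infiniteness, nuclearity, and membership in the bootstrap class relevant to the Universal Coefficient Theorem---it suffices to verify these properties for $\cA$ itself.

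Finally, with (H0)--(H3) in hand, the cited results of \cite{rs2} apply directly. We already know from the discussion preceding the statement that \cite[Theorem 5.9]{rs2} yields simplicity of $\cA$; the remaining references \cite[Proposition 5.11, Corollary 6.4 and Remark 6.5]{rs2} supply nuclearity, pure infiniteness, and the Universal Coefficient Theorem. Taken together these give all four conclusions for $\cA$, and hence for $\cA(\G)$ via $\phi$.

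The only point requiring genuine care---and thus the main obstacle---is bookkeeping: confirming that the conditions (H0)--(H3) as formulated in this paper coincide exactly with the standing hypotheses under which \cite[Proposition 5.11, Theorem 5.9, Corollary 6.4 and Remark 6.5]{rs2} are proved, so that each cited result may be quoted verbatim. Once this matching of hypotheses is checked, the theorem follows with no further argument, which is why it is described as an immediate consequence.
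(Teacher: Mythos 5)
Your proposal is correct and follows exactly the paper's route: Lemmas \ref{M_1M_2} and \ref{L2} verify (H0)--(H3), the isomorphism of Theorem \ref{main01} identifies $\cA(\G)$ with the rank-2 Cuntz--Krieger algebra $\cA$, and the cited results of \cite{rs2} then deliver simplicity, pure infiniteness, nuclearity and the UCT. The paper compresses this into one sentence (``immediate consequence''), and your only addition---checking that the hypotheses (H0)--(H3) match the standing assumptions of \cite{rs2}---is precisely the bookkeeping the paper leaves implicit.
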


It also follows from \cite{rs2} that $\cA(\G)$ satisfies the U.C.T., hence it is classified by its K-theory, together with the class $[\id]$ of its identity element in $K_0$. It is therefore of interest to determine the K-theory of $\cA(\G)$. The matrices
$(I-M_1,I-M_2)$ and $(I-M_1^t,I-M_2^t)$ define homomorphisms $\ZZ^{\cR}\oplus \ZZ^{\cR} \to \ZZ^{\cR}$.
The K-theory of $\cA(\G)$ can be expressed as follows \cite {rs3},
where $G^{\tors}$ denote the torsion part of a finitely generated abelian group $G$, and
$\rank(G)$ denotes the rank of $G$.
\begin{align*}
\rank(K_0(\cA(\G)))&=\rank(K_1(\cA(\G)))\\
&=
\rank(\coker\begin{smallmatrix}(I-M_1,&I-M_2)\end{smallmatrix})+
\rank(\coker\begin{smallmatrix}(I-M^t_1,&I-M^t_2)\end{smallmatrix}) \\
K_0(\cA(\G))^{\tors} &\cong \coker\begin{smallmatrix}(I-M_1,&I-M_2)\end{smallmatrix}^{\tors} \\
K_1(\cA(\G)^{\tors} &\cong \coker\begin{smallmatrix}(I-M^t_1,&I-M^t_2)\end{smallmatrix}^{\tors}.
\end{align*}

Recall that we defined $C=C(\G)=\coker\begin{smallmatrix}(I-M_1,&I-M_2)\end{smallmatrix}$.
The next result therefore follows from Lemma \ref{Mtranspose}.

\begin{theorem}\label{mainK}
If $\G$ is a BM group then
\begin{equation}\label{Kth}
K_0(\cA(\G))=K_1(\cA(\G))=C\oplus\ZZ^{\rank(C)}.
\end{equation}
\end{theorem}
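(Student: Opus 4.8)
The plan is to read off both K-groups directly from their ranks and their torsion subgroups, using the structure theorem for finitely generated abelian groups. The two inputs are the K-theory formulas quoted from \cite{rs3} immediately above the statement, and Lemma \ref{Mtranspose}; the theorem is essentially a matter of assembling these.

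First I would use Lemma \ref{Mtranspose}, which supplies a permutation matrix $P$ with $PM_jP=M_j^t$, to conclude that conjugation by $P$ induces an isomorphism $\coker(I-M_1, I-M_2)\cong\coker(I-M_1^t, I-M_2^t)$. Thus \emph{both} cokernels occurring in the formulas of \cite{rs3} are isomorphic to $C$; in particular $\rank(\coker(I-M_1^t, I-M_2^t))=\rank(C)$ and $\coker(I-M_1^t, I-M_2^t)^{\tors}\cong C^{\tors}$.

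Next I would combine this observation with the three displayed formulas. The rank formula gives
$$\rank(K_0(\cA(\G)))=\rank(K_1(\cA(\G)))=\rank(C)+\rank(C)=2\rank(C),$$
while the torsion formulas give $K_0(\cA(\G))^{\tors}\cong C^{\tors}$ and, using the previous paragraph, $K_1(\cA(\G))^{\tors}\cong C^{\tors}$ as well. Since each $K_i(\cA(\G))$ is a finitely generated abelian group, the structure theorem yields
$$K_i(\cA(\G))\cong C^{\tors}\oplus\ZZ^{2\rank(C)},\qquad i=0,1.$$
Finally I would rewrite the right-hand side in the form stated in \eqref{Kth} by noting that $C\cong C^{\tors}\oplus\ZZ^{\rank(C)}$, so that $C^{\tors}\oplus\ZZ^{2\rank(C)}\cong C\oplus\ZZ^{\rank(C)}$.

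I do not expect any genuine obstacle here: all the analytic and homological content sits in \cite{rs3} and in Lemma \ref{Mtranspose}, both of which may be assumed. The only step requiring a moment's care is the final bookkeeping identity, where one must verify that $C\oplus\ZZ^{\rank(C)}$ has exactly the rank $2\rank(C)$ and the torsion subgroup $C^{\tors}$ computed for each K-group; this matching is immediate once the ranks and torsion parts are in hand, and the symmetry $K_0\cong K_1$ is precisely what Lemma \ref{Mtranspose} forces.
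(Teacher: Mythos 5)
Your proposal is correct and follows exactly the paper's route: the paper likewise deduces the theorem by combining the K-theory formulas quoted from \cite{rs3} with Lemma \ref{Mtranspose} (whose statement already records $\coker\begin{smallmatrix}(I-M_1,&I-M_2)\end{smallmatrix}=\coker\begin{smallmatrix}(I-M_1^t,&I-M_2^t)\end{smallmatrix}$), so both cokernels equal $C$ and the structure theorem gives $K_0=K_1=C^{\tors}\oplus\ZZ^{2\rank(C)}=C\oplus\ZZ^{\rank(C)}$. The only difference is that you spell out the bookkeeping the paper leaves implicit in its one-line ``follows from Lemma \ref{Mtranspose}.''
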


The identity element in $\cA(\G)$ is denoted by $\id$. As is the case for similar algebras
\cite[Proposition 5.4]{rs3}, \cite{ro2}, the class $[\id]$ has torsion in $K_0(\cA(\G))$.
In the present setup we can be much more precise.  For notational convenience, let $\alpha=\frac{m}{2}, \beta=\frac{n}{2}$
and let $\rho=\gcd(\alpha-1, \beta-1)$.

\begin{proposition}\label{orderof1} Let $[\id]$ be the class in $K_0(\cA(\G))$ of the identity element of $\cA(\G)$, where $\G$ is a BM group. Then $\rho.[\id]=0$.

\noindent (a) If $\rho$ is odd, then the order of $[\id]$ is precisely $\rho$.

\noindent (b) If $\rho$ is even, then the order of $[\id]$ is either
$\rho$ or $\frac{\rho}{2}$.
\end{proposition}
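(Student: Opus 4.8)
The plan is to transport the whole question into the cokernel group $C=\coker(I-M_1,I-M_2)$ of Theorem~\ref{mainK} and compute the order of a single distinguished element there. In the Cuntz--Krieger algebra $\cA$ the projections $s_{r,r}$ ($r\in\cR$) are mutually orthogonal by~(\ref{rel1d*}) and sum to $\id$, and under the K-theory identification $K_0(\cA(\G))^{\tors}\cong C^{\tors}$ each $[s_{r,r}]$ corresponds to the generator $r$ of $C$ (this is the content of the computation of \cite[Proposition 5.4]{rs3}). Hence $[\id]$ corresponds to $\Sigma:=\sum_{r\in\cR}r\in C$, and it suffices to determine the order of $\Sigma$.

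The heart of the argument is a \emph{sharp} upper bound, and this is the step I expect to be the main obstacle. Writing $r=(a,b,b',a')$ and setting $\mathrm{bot}(r)=a$, $\mathrm{right}(r)=b$, I would group the defining relations of $C$ one edge-class at a time rather than summing them globally. Fix $a\in A$ and put $R_a=\sum_{\{r:\ \mathrm{bot}(r)=a\}}r$, so $\sum_{a\in A}R_a=\Sigma$. Summing the relation $r=\sum_sM_1(s,r)s$ over all $r$ with $\mathrm{bot}(r)=a$ and interchanging the order of summation, the coefficient of a tile $s$ is $\#\{r:\mathrm{bot}(r)=a,\ M_1(s,r)=1\}$. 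Since $M_1(s,r)=1$ means $\mathrm{left}(s)=\mathrm{right}(r)$ and $\mathrm{bot}(s)\ne a^{-1}$, and since a tile is determined by its bottom and right edges (Definition~\ref{def BM group}(iii)), this coefficient is $1$ when $\mathrm{bot}(s)\ne a^{-1}$ and $0$ otherwise. This yields
\[
R_a+R_{a^{-1}}=\Sigma\qquad(a\in A).
\]
Because $a\mapsto a^{-1}$ is a fixed point free involution of $A$, summing this identity over the $\alpha=|A|/2$ inverse-pairs gives $\Sigma=\alpha\Sigma$, that is $(\alpha-1)\Sigma=0$; the vertical matrix $M_2$ gives $(\beta-1)\Sigma=0$ in exactly the same way. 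Hence $\rho\,\Sigma=0$, so $\rho\,[\id]=0$ and the order of $[\id]$ divides $\rho$.

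For the lower bound I would use the augmentation $\varepsilon$ sending every generator $r$ to $1$. Since each column of $M_1$ has exactly $m-1$ nonzero entries, the relation $r=\sum_sM_1(s,r)s$ forces $1\equiv m-1$, i.e. it holds modulo $m-2=2(\alpha-1)$; likewise the $M_2$-relations hold modulo $n-2=2(\beta-1)$. Thus $\varepsilon$ descends to a homomorphism $C\to\ZZ/2\rho\ZZ$. It sends $\Sigma$ to $|\cR|=mn\equiv 4\pmod{2\rho}$ (using $\alpha\equiv\beta\equiv1\pmod\rho$), and $4$ has order $2\rho/\gcd(4,2\rho)$ in $\ZZ/2\rho\ZZ$, namely $\rho$ when $\rho$ is odd and $\rho/2$ when $\rho$ is even. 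Consequently the order of $[\id]$ is at least $\rho$ in the odd case and at least $\rho/2$ in the even case.

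Combining the two bounds proves the proposition: when $\rho$ is odd the order divides $\rho$ and is at least $\rho$, so it equals $\rho$; when $\rho$ is even the order divides $\rho$ and is a multiple of $\rho/2$, hence is $\rho/2$ or $\rho$. The reason the upper bound is delicate is that the naive global sum of all the relations only gives $(m-2)\Sigma=0=2(\alpha-1)\Sigma$, which is off by precisely the factor of $2$ that separates the two cases; the refinement to $(\alpha-1)\Sigma=0$ is exactly what the edge-by-edge summation and the bijectivity of Definition~\ref{def BM group}(iii) buy. Getting this bookkeeping right — in particular verifying that bottom and right edges genuinely determine a tile, so that the interchanged coefficients are only $0$ or $1$ — is where the real work lies.
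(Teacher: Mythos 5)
Your proof is correct, and while your sharp lower bound coincides with the paper's argument, your upper bound takes a genuinely different route. The paper proves $\rho[\id]=0$ geometrically, inside $\cA(\G)=C(\Om)\rtimes\G$: for a directed edge $e$ labelled $a\in A$ it decomposes $\id=p_e+\overline p_e$ according to the two possible positions of a representative sector relative to $e$ (Figure \ref{chambersector}), giving $[\id]=[a]+[a^{-1}]$, and the uniqueness of the representative sector based at a fixed vertex gives $[\id]=\sum_{a\in A}[a]=\sum_{b\in B}[b]$; together these yield $[\id]=\alpha[\id]=\beta[\id]$. You obtain the same two annihilation relations $(\alpha-1)\Sigma=(\beta-1)\Sigma=0$ purely combinatorially inside $C$, by summing the cokernel relations over the tiles with a fixed bottom edge $a$ and invoking the bijectivity of the projection $(a,b,b',a')\mapsto(a,b)$ from Definition \ref{def BM group}(iii); your bookkeeping is right, since $M_1(s,r)=1$ exactly when the left edge of $s$ equals the right edge of $r$ and the bottom edge of $s$ differs from $a^{-1}$, so the interchanged coefficient is $0$ or $1$ as you claim, and your identity $R_a+R_{a^{-1}}=\Sigma$ is the combinatorial shadow of the paper's splitting $\id=p_e+\overline p_e$. (For $M_2$ one groups by a $B$-labelled edge and uses another of the four bijective projections of (iii); the computation is the same.) Your remark that the naive global summation yields only $(m-2)\Sigma=0$ correctly identifies why a refinement is needed, and the edge-by-edge grouping supplies exactly the missing factor of $2$. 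As for what each approach buys: yours turns the whole proposition into a statement about the finitely presented abelian group $C$, needing from K-theory only that $r\mapsto[r]$ is a well-defined homomorphism with $[\id]=\sum_{r\in\cR}[r]$ for the upper bound, together with the monomorphism of \cite{rs3} for the lower bound; the paper's upper bound bypasses $C$ entirely and manipulates classes of explicit projections in $K_0$ directly. One imprecision you should repair: state the identification as the paper does --- $r\mapsto[r]$ is a monomorphism from $C$ onto a direct summand of $K_0(\cA(\G))$ and $[\id]$ is the image of $e=\sum_{r\in\cR}r$ --- since the torsion-part isomorphism $K_0(\cA(\G))^{\tors}\cong C^{\tors}$ by itself does not locate $[\id]$ before you know that class is torsion.
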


The proof of this result depends upon an examination of explicit projections in $\cA(\G)$.

If $c$ is an oriented basepointed square in $\D$ with base vertex $v_0$, let $\Om(c)$ denote the clopen subset of $\Omega$ consisting of all boundary points with representative sector having initial square $c$ and initial vertex $v_0$. The indicator function $p_c$ of the set $\Om(c)$ is continuous and so lies in $C(\Omega)\subset C(\Omega) \rtimes \G$. See Figure \ref{bdpt}.
The covariance relation (\ref{covar}) implies that the class of $p_c$ in $K_0(\cA(\G))$ depends only on the $\G$-equivalence class of the oriented basepointed square $c$. Recall that we identify such a $\G$-equivalence class with a tile $r\in \cR$. It is therefore appropriate to denote the class of $p_c$ in $K_0(\cA(\G))$ by $[r]$.

\refstepcounter{picture}
\begin{figure}[htbp]
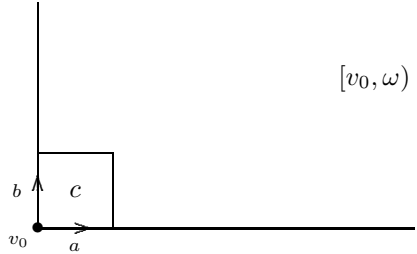
\label{bdpt}
\hfil
\centerline{
\beginpicture
\setcoordinatesystem units <1cm, 1cm>
\setplotarea  x from -6 to 6,  y from -1 to 2.2
\putrule from -3 -1 to 2 -1
\putrule from -3 -1 to -3 2
\putrule from -3 0 to -2 0
\putrule from -2 -1 to  -2 0
\arrow <6pt> [.3,.67] from    -2.5  -1 to   -2.3 -1
\arrow <6pt> [.3,.67] from    -3  -0.5 to   -3 -0.3
\put {$\bullet$} at -3 -1
\put {$[v_0, \omega)$} [l]     at   1 1
\put {$_{a}$} [t]     at   -2.5 -1.2
\put {$_{b}$} [r]     at   -3.2 -0.5
\put {$c$}      at   -2.5 -0.5
\put {$_{v_0}$}[t,r]  at   -3.1 -1.1
\endpicture
}
\hfil
\caption{A sector representing $\omega\in\Om(c)$.}
\end{figure}

Similarly, to each $a\in A$ and $b\in B$ we can associate elements $[a], [b] \in K_0(\cA(\G))$. For example, if $a\in A$, fix a directed edge labelled by the element $a\in A$ and consider the set of all boundary points $\omega$ with representative sector having initial square $c$ containing that edge, as in Figure \ref{bdpt}.  As above, the class in $K_0(\cA(\G))$ of the characteristic function of this set depends only on the label $a$, and may be denoted by $[a]$.  The class $[b]\in K_0(\cA(\G))$ for $b\in B$ is defined similarly.

\bigskip

Recall now the following result.

\begin{lemma}\label{ronan}
{\rm \cite[Lemma 9.4]{ron}}
Given any chamber $c\in\D$ and any sector $S$ in $\D$, there exists a sector $S_1\subset S$ such that $S_1$ and $c$ lie in a common apartment.
\end{lemma}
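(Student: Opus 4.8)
The plan is to exploit the product structure $\D=T_1\times T_2$ and reduce the statement to an elementary fact about a single tree. Recall from the earlier discussion that an apartment of $\D$ is a product $\ell_1\times\ell_2$ of bi-infinite geodesics $\ell_i\subseteq T_i$, a chamber is a product $e_1\times e_2$ of edges $e_i\subseteq T_i$, and a $\frac{\pi}{2}$-angled sector is a product $\rho_1\times\rho_2$ of geodesic rays $\rho_i\subseteq T_i$, a subsector being obtained by replacing each $\rho_i$ with a subray. So if $c=e_1\times e_2$ and $S=\rho_1\times\rho_2$, I would solve the problem separately in each factor: for each $i$, produce a subray $\rho_i'\subseteq\rho_i$ and a bi-infinite geodesic $\ell_i\subseteq T_i$ with $e_i\cup\rho_i'\subseteq\ell_i$. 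Setting $\ell=\ell_1\times\ell_2$ and $S_1=\rho_1'\times\rho_2'$ then yields an apartment $\ell$ of $\D$ containing both $c=e_1\times e_2$ and the subsector $S_1\subseteq S$, which is exactly what is required.

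First I would establish the single-tree version: given an edge $e$ and a ray $\rho$ in a homogeneous tree $T$ of degree $\ge 3$ (here even and $\ge 4$), there is a subray $\rho'\subseteq\rho$ lying with $e$ on a common bi-infinite geodesic. Let $\xi\in\partial T$ be the end to which $\rho$ converges. Deleting the open edge $e$ disconnects $T$ into two subtrees, each infinite because the degree is $\ge 3$; let $T'$ be the component whose boundary does not contain $\xi$, and choose any $\eta\in\partial T'$. Since $\eta$ and $\xi$ lie on opposite sides of $e$ we have $\eta\neq\xi$, and the unique bi-infinite geodesic $\ell$ with ends $\eta,\xi$ must cross $e$, so $e\subseteq\ell$.

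The one remaining point is that $\ell$ also contains a tail of $\rho$. Here I would invoke the standard tree fact that two geodesic rays converging to a common end eventually coincide: both $\rho$ and the half of $\ell$ running toward $\xi$ converge to $\xi$, so their intersection is a subray $\rho'\subseteq\rho$ with $\rho'\subseteq\ell$. Hence $e\cup\rho'\subseteq\ell$, completing the tree version. Applying this in $T_1$ to $(e_1,\rho_1)$ and in $T_2$ to $(e_2,\rho_2)$ and reassembling as in the first paragraph then finishes the proof.

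The genuine geometric content is entirely in the single tree and is elementary. I expect the step needing the most care to be the reduction itself: verifying that chambers, sectors, subsectors, and apartments of $\D$ really do factor as the claimed Cartesian products, so that the product $\rho_1'\times\rho_2'$ of the two subrays is an honest subsector of $S$ and lies in the product apartment $\ell_1\times\ell_2$. Once this correspondence is pinned down, the per-tree construction of $\ell_i$ together with the common-tail fact gives the result with no further difficulty.
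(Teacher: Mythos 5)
Your proof is correct, but it is genuinely different from what the paper does: the paper offers no argument at all, simply citing Ronan's Lemma 9.4, which is proved there for an arbitrary building by general building-theoretic means (working with chambers deep inside the sector and the apartment axioms), and which is what one must invoke when $\D$ is, say, an $\tA_2$ building. You instead exploit the special $\axa$ structure: apartments, chambers and sectors of $\D=T_1\times T_2$ factor as products $\ell_1\times\ell_2$, $e_1\times e_2$, $\rho_1\times\rho_2$, reducing everything to the one-tree statement that an edge $e$ and a tail of a ray $\rho$ lie on a common bi-infinite geodesic. Your tree argument is sound: removing the open edge $e$ leaves two infinite components, the end $\xi$ of $\rho$ lies in the boundary of exactly one of them, any end $\eta$ of the other component gives a geodesic $(\eta,\xi)$ that must traverse $e$, and two rays with the same end in a tree share a common subray, so a tail $\rho'$ of $\rho$ lies on that geodesic. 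You also correctly identify the only point needing care, namely that $\rho_1'\times\rho_2'$ really is a subsector of $S$ and that $\ell_1\times\ell_2$ really is an apartment containing both it and $c$; these product identifications are standard for $\axa$ buildings (the paper's remark that $\Om\cong\partial T_1\times\partial T_2$ reflects the same fact). The trade-off is the usual one: your route is elementary, self-contained and transparent, but works only for products of trees, whereas the cited lemma holds uniformly for all buildings.
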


It follows by considering parallel sectors in an appropriate apartment that
if $e=[v_0,v_1]$ is a directed edge in $\D$ and if $\w\in\Om$, then $\w$ has a representative sector  $S$ that lies relative to $e$ in one of the two positions in Figure \ref{chambersector}, in some apartment containing them both.

\refstepcounter{picture}
\begin{figure}[htbp]
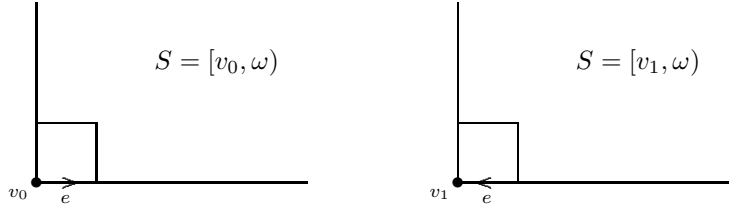
\label{chambersector}
\hfil
\centerline{
\beginpicture
\setcoordinatesystem units <0.8cm, 0.8cm>
\setplotarea  x from -6 to 6,  y from -1 to 2.2
\putrule from -3 -1 to 1.5 -1
\putrule from -3 -1 to -3 2
\putrule from -3 0 to -2 0
\putrule from -2 -1 to  -2 0
\put {$\bullet$} at -3 -1
\put {$S=[v_0,\omega)$}      at   0 1
\put {$_{e}$} [t]     at   -2.5 -1.2
\put {$_{v_0}$}[t,r]  at   -3.1 -1.1
\arrow <6pt> [.3,.67] from    -2.5  -1 to   -2.3 -1
\setcoordinatesystem units <0.8cm, 0.8cm> point at -7 0
\setplotarea  x from -6 to 6,  y from -1 to 2.2
\putrule from -3 -1 to 1.5 -1
\putrule from -3 -1 to -3 2
\putrule from -3 0 to -2 0
\putrule from -2 -1 to  -2 0
\put {$\bullet$} at -3 -1
\put {$S=[v_1, \omega)$}      at   0 1
\put {$_{e}$} [t]     at   -2.5 -1.2
\put {$_{v_1}$}[t,r]  at   -3.1 -1.1
\arrow <6pt> [.3,.67] from    -2.5 -1   to    -2.7 -1
\endpicture
}
\hfil
\caption{Relative positions of a directed edge and a representative sector.}
\end{figure}

Let $p_e$ denote the characteristic function of the set of points $\w\in \Om$
such that $e$ is contained in $[v_0, \w)$, as in the left hand diagram.
Let $\overline p_e$ denote the characteristic function of the set of points $\w\in \Om$
such that $e$ is contained in $[v_1, \w)$, as in the right hand diagram.
 It follows that $p_e, \overline p_e$ are idempotents in $\cA(\G)$ and
$\id = p_e + \overline p_e$.
If the edge $e$ has label $a\in A$ [respectively $b\in B$] then in
$K_0(\cA(\G))$, $[p_e]= [a]\, \text{and}\, [\overline p_e]=[a^{-1}]$
[respectively $[p_e]= [b]\, \text{and}\, [\overline p_e]=[b^{-1}]$].

We therefore obtain

\begin{equation}\label{relationsp}
\begin{split}
[\id]& = [a] + [a^{-1}], \qquad a\in A\\
& = [b] + [b^{-1}], \qquad b\in B.
\end{split}
\end{equation}

The relations (\ref{relationsp}) imply that

\begin{equation*}
\begin{split}
\alpha[\id]& = \displaystyle\sum_{a\in A}[a],\\
\beta[\id]& = \displaystyle\sum_{a\in B}[b].
\end{split}
\end{equation*}

Also, each boundary point $\w$ has a unique representative sector
based at a fixed vertex $v_0$ with initial edges as in Figure \ref{bdpt}.
It follows that

\begin{equation*}
[\id] = \displaystyle\sum_{a\in A}[a] =
\displaystyle\sum_{b\in B}[b].
\end{equation*}

Therefore

\begin{equation*}
[\id] = \alpha[\id]=\beta[\id],
\end{equation*}

from which it follows that $\rho.[\id]=0$, thus proving the first assertion
 in Proposition \ref{orderof1}.

In order to obtain a lower bound for the order of the class $[\id]$ in $K_0(\cA(\G))$ we need to use the fact (proved in \cite{rs3}) that the map $r\mapsto [r]$ is a monomorphism from the abstract group
\[
C=
\langle r\in\cR ; r=\sum_sM_j(s,r)s,\ j=1,2 \rangle
\]
onto a direct summand of $K_0(\cA(\G))$. The class $[\id]$ is the image of the element
$e=\displaystyle\sum_{r\in\cR} r$ under this map.
Moreover each column of the matrix $M_1$ [respectively $M_2$] has  $(m-1)$ [respectively $(n-1)$] nonzero terms. (See Section \ref{subshift}.)

Let $k=2\rho=\gcd(m-2,n-2)$, and define a map $\phi : C \to \ZZ/k\ZZ$ by
$\phi(r)= 1 + \ZZ/k\ZZ$.
The map $\phi$ is well defined since each relation in the presentation of $C$ expresses a generator $r$ as a sum of $(m-1)$ or $(n-1)$ other generators.  Also $\phi(e)=mn+\ZZ/k\ZZ = 4 + \ZZ/k\ZZ$, since
$(m-2)(n-2)=mn-2(m-2)-2(n-2)-4$.  There are now two cases to consider.

\noindent (a)  Suppose that $\rho$ is odd. Then $\phi(e)$ has order $\rho$ in $\ZZ/k\ZZ$.
Therefore the order of $e$ in $C$ is divisible by $\rho$ and hence equal to $\rho$.

\noindent (b)  Suppose that $\rho$ is even. Then $\phi(e)$ has order $\frac{\rho}{2}$ in $\ZZ/k\ZZ$. Therefore the order of $e$ in $C$ is divisible by $\frac{\rho}{2}$.
\qed

\begin{remark}\label{conjectures}
It is tempting to conjecture that the order of the class $[\id]$ is always precisely $\rho$. As we shall see below, there is some supporting evidence for this.
There is also computational evidence that $\rank(C)=\rank(H_2(\G))$,
that is $\rank(K_0(\cA(\G))) =2\rank(H_2(\G))$.
\end{remark}

\begin{remark}\label{tpuct}
Recall from Theorem \ref{main1} that $\cA(\G)$ is a p.i.s.u.n. $C^*$-algebra
satisfying the UCT. Furthermore, by Theorem \ref{mainK},
$$K_0(\cA(\G))=K_1(\cA(\G))=\ZZ^{2n}\oplus T$$
where $T$ is a finite abelian group.
It follows from \cite[Proposition 7.3]{ro1} that $\cA(\G)$ is stably isomorphic to $\cA_1 \otimes \cA_2$,
where $\cA_1$, $\cA_2$ are simple rank one Cuntz-Krieger algebras and $K_0(\cA_2)=K_1(\cA_2)=\ZZ$.
\end{remark}

\bigskip

\section{Examples.}\label{exs}

In this section we consider some examples of BM groups $\G$ and the results of
the computations for the group $C=C(\G)$.
It is useful to relate our results to the Euler-Poincar\'e characteristic $\chi(\G)$, which
is the alternating sum of the ranks of the groups $H_i(\G)$.
The finite cell complex $\G\backslash\D$ is a $K(\G,1)$ space and $\G$ has homological dimension at most two, so that $H_2(\G)$ is free abelian and $H_i(\G)=0$ for $i>2$ . It follows that  $\chi(\G)$ coincides with the usual Euler-Poincar\'e characteristic of the cell complex ${\mathcal Y}=\G\backslash\D$.
Explicitly, $\chi(\G)=(\alpha -1)(\beta-1)$, where $m=2\alpha$ and $n=2\beta$.

\begin{example}
Suppose that $\G$ is a direct product of free groups of ranks $\alpha$ and $\beta$, acting on a product $\D$  of homogeneous trees of degrees $m=2\alpha$ and $n=2\beta$ respectively. Then direct computation shows that
\begin{equation*}
K_0(\cA(\G)) = \ZZ^{2\alpha\beta}\oplus(\ZZ/(\beta-1)\ZZ)^\alpha\oplus(\ZZ/(\alpha-1)\ZZ)^\beta
\oplus{\ZZ/\rho\ZZ}
\end{equation*}
where $\rho=\gcd(\alpha-1,\beta-1)$ is the order of the class $[\id]$.
For this group, $H_2(\G)=\ZZ^{\alpha\beta}$, and the conjectures of
Remark \ref{conjectures} are verified.

In this example  $\cA(\G)$ is actually isomorphic in a natural way to a tensor product
of Cuntz-Krieger algebras. In fact, by a result of J. Spielberg \cite[Section 1]{ro1}, the action of a free group $\FF_{\alpha}$ on the boundary of
its Cayley graph gives rise a Cuntz-Krieger algebra $\cA(\FF_{\alpha})$. It is easy to check that
$\cA(\G)\cong \cA(\FF_{\alpha})\otimes \cA(\FF_{\beta})$.
(Compare with Remark \ref{tpuct}.)
The formula for $K_0(\cA(\G))$ can thus also be verified using the K\"unneth Theorem for tensor products.
\end{example}

\begin{example}
Consider some specific examples studied in \cite[Section 3]{moz}.  If $p,l \equiv 1 \pmod 4$ are two distinct primes, Mozes constructs a lattice subgroup $\G_{p,l}$ of $G=PGL_2(\QQ_p) \times
PGL_2(\QQ_l)$.  The building $\D$ of $G$ is a product of two homogeneous trees
$T_1$, $T_2$ of degrees $(p+1)$ and $(l+1)$ respectively. The group $\G_{p,l}$ is a BM group which acts freely and transitively on the vertex set of $\D$, but $\G_{p,l}$
 is {\bf not} a product of free groups. In fact $\G_{p,l}$ is an irreducible lattice in $G$.

Here is how $\G_{p,l}$ is constructed \cite{moz}. Let $\HH(\ZZ)=\{\alpha=a_0+a_1i+a_2j+a_3k ; a_j\in \ZZ\}$, the ring of integer quaternions. Let $i_p$ be a square root of $-1$ in $\QQ_p$ and define
$$\psi : \HH(\ZZ) \to PGL_2(\QQ_p) \times PGL_2(\QQ_l)$$
 by
$$\psi(a_0+a_1i+a_2j+a_3k)=\left(
\begin{bmatrix}
a_0+a_1i_p & a_2+a_3i_p \\
-a_2+a_3i_p  & a_0-a_1i_p \\
\end{bmatrix},
\begin{bmatrix}
a_0+a_1i_l & a_2+a_3i_l \\
-a_2+a_3i_l  & a_0-a_1i_l \\
\end{bmatrix}
\right).
$$

Let $\tilde\G_{p,l}=\{\alpha=a_0+a_1i+a_2j+a_3k\in \HH(\ZZ) ; a_0\equiv 1 \pmod 2,
a_j\equiv 0 \pmod 2, j=1,2,3, |\alpha|^2=p^rl^s\}$.
Then $\G_{p,l}=\psi(\tilde\G_{p,l}$) is a torsion free cocompact lattice in $G$.
Let
\begin{align*}
A&=\{a=a_0+a_1i+a_2j+a_3k\in \tilde\G_{p,l} ;\;  a_0>0, |a|^2=p\}, \\
B&=\{b=b_0+b_1i+b_2j+b_3k\in \tilde\G_{p,l} ;\;  b_0>0, |b|^2=l\}.
\end{align*}

Then $A$ contains $p+1$ elements and $B$ contains $l+1$ elements.
The images $\underline A$, $\underline B$ of $A, B$ in $\G_{p,l}$ generate free groups $\G_p$, $\G_l$ of orders $\frac{p+1}{2}$, $\frac{l+1}{2}$ respectively and $\G_{p,l}$ itself is generated by $\underline A \cup \underline B$.
The product $T_1\times T_2$ is the Cayley graph of $\G_{p,l}$ relative to this set of generators.

The group $\G_{p,l}$ is a BM group and $\rho=\frac{1}{2}\gcd(p-1,l-1)$ is even.  Explicit computations, using the formula (\ref{Kth}) and the MAGMA computer algebra package, show that the order of $[\id]$ is $\rho$ in each of the 28 groups $\G_{p,l}$ where $p,l \equiv 1 \pmod 4$ are two distinct primes $\le 61$.

The normal subgroup theorem \cite[Theorem 4.1]{bm2} can be applied to $\G_{p,l}$, if the Legendre symbol
$\left(\begin{smallmatrix}p \\ l\end{smallmatrix}\right)=1$. For, using the notation of \cite[Section 2.4]{bm2} and \cite[Remarks following Proposition 1.8.1]{bm3},
the group $H_p^{\infty}=PSL_2( \QQ_p)$ has finite index in $H_p=\overline {pr_p(\Gamma_{p,l})}$.
Moreover $H_p$ is locally $\infty$ transitive. Thus the hypotheses of [BM2: Theorem 4.1] are satisfied.

Applying \cite[Theorem 4.1]{bm2} to the commutator subgroup
$[\G_{p,l}, \G_{p,l}]$ of  $\G_{p,l}$ shows that the abelianization
$H_1(\G_{p,l})=\G_{p,l}/[\G_{p,l}, \G_{p,l}]$ is finite.

The Euler-Poincar\'e characteristic of $\G_{p,l}$ is
$\chi=\chi(\G_{p,l})=\frac{(p-1)(l-1)}{4}$.
Thus $\chi(\G_{p,l})= \rank H_0(\G_{p,l}) + \rank H_2(\G_{p,l})$
and so $H_2(\G_{p,l})=\ZZ^{\chi -1}$.
Explicit computations for the same range of values of $p, l$ as above shows that
the second conjecture of Remark \ref{conjectures} is also verified in these cases.

Another experimental observation is the following. Checking through a large number of
values of the pairs
of primes $(p,l)$ congruent to 1 mod 4, one sees that the abelianization of $\G_{p,l}$
seems to depend only the greatest common divisor
$r=\gcd((p-1)/4,(l-1)/4,6)$. In fact we conjecture that

\begin{equation*}
H_1(\G_{p,l}) =
\begin{cases}
    \ZZ/2\ZZ \oplus (\ZZ/4\ZZ)^3 &  \text{if\; $r=1$},\\
    (\ZZ/2\ZZ)^3 \oplus (\ZZ/8\ZZ)^2 &  \text{if\; $r=2$},\\
  \ZZ/2\ZZ \oplus \ZZ/3\ZZ \oplus (\ZZ/4\ZZ)^3  &  \text{if\; $r=3$},\\
  (\ZZ/2\ZZ)^3 \oplus \ZZ/3\ZZ \oplus (\ZZ/8\ZZ)^2 &  \text{if\; $r=6$}.
\end{cases}
\end{equation*}
\end{example}

The validity of this formula was checked for all values of the pair
$(p,l)$ up to $(73,97)$, and for several other values.

\bigskip

\section{BM groups degrees $m=n=4$}\label{bm2by2}

A BM group acts on a product of homogeneous trees $T_1\times T_2$, where
$T_1$ has degree $m=2\alpha\ge 4$ and $T_2$ has degree $n=2\beta\ge 4$.
We now examine the simplest case $m=n=4$.

Recall that a BM square is a set of four distinct tuples
$$\{(a,b,b',a'), (a^{-1},b',b,{a'}^{-1}) ,({a'}^{-1},{b'}^{-1},b^{-1},a^{-1}), (a',b^{-1},{b'}^{-1},a)\}.$$
contained in $A\times B\times B\times A$,  as in Definition \ref{def BM group}(i),(ii).
In the presentation (\ref{presentation}), a BM group has $4\alpha\beta$ generators
and a set of relations which we call a set of BM relations.
Possible sets of BM relations correspond to those unions of $\alpha\beta$ BM
squares whose images under a projection chosen from Definition \ref{def BM group}(iii)
are disjoint.

Choose once and for all such a projection, say $(a,b,b',a') \mapsto  (a,b)$.
Consider the graph ${\mathfrak G}$ whose vertices are the BM squares and whose edges join BM squares
having disjoint images under the chosen projection.
Possible sets of BM relations correspond to cliques (complete subgraphs) of size $\alpha\beta$ in the graph
${\mathfrak G}$.
The computer algebra package MAGMA contains a dynamic programming algorithm due to \cite{wendym} for finding all cliques of a given size.

For $\alpha=\beta=2$ there are 541 cliques in ${\mathfrak G}$ of size $4$. Say that two of the corresponding presentations are equivalent if one may be obtained from the other by permuting the generators.
There are precisely 52 equivalence classes of presentations.
In the tables below we list 52 representative presentations, together with the structure of the groups $H_1(\Gamma)$ and $C(\Gamma)$.
For BM groups  $\G$ with semi-degrees $\alpha=\beta=2$, we have $\chi(\G)=1$. Thus
$\rank H_1(\Gamma) = \rank H_2(\Gamma)$.
Therefore the tables show that the conjectures of
Remark \ref{conjectures} are verified for all such groups.

In the tables, each of the 52 groups arising from these presentations is assigned a name $\Gamma= 2\times2.j$.
Using an algorithm due to C. C. Sims \cite{ccs}, which is implemented in the MAGMA package,
we have found the indices and lengths of all conjugacy classes of subgroups with index at
most 8 in each of these 52 groups. From these data we have checked that the only possible isomorphisms between $2\times2.j$ and $2\times2.k$ occur if both $j$ and $k$ lie in one of the following sets:

{\small
$\{1 ,17 \}$, $\{3 ,19 \}$, $\{4 ,30 \}$, $\{5 ,10 \}$,
$\{7 ,21 \}$, $\{26 ,46 \}$,
$\{27 ,29, 45 \}$, $\{28 ,43, 47 \}$, $\{42 ,44 \}$.
}%end small

\noindent Thus, amongst
the 52 groups $2\times2.j$, there are at least 41 non-isomorphic groups.

The quotient complexes $\cY=(T_1\times T_2)/\G$ corresponding to the groups
$\G=2\times2.j$ with $j$ in any of these sets are not homeomorphic, as can be seen by considering the fundamental groups of one point deletion subspaces of $\cY$. One therefore obtains 52
pairwise non-homeomorphic complexes $\cY$.
Of course, this does not imply that the groups $\G$ are non-isomorphic.
In fact, we have discovered Tietze transformations showing that,
if $\{j,k\}$ is contained in any one of the listed sets, except for $\{4,30 \}$
or  $\{5 ,10 \}$, then the corresponding
groups are isomorphic.
There are thus at most 43 non-isomorphic groups.
We do not know whether  the groups $2\times2.j$ and $2\times2.k$ are isomorphic
in the two cases $\{j,k\}=\{4,30 \}$ and  $\{j,k\}=\{5 ,10 \}$.
Note that the shift system group $C(\Gamma)$ depends only on $\G$ and not on the presentation of $\G$.

The following notation is used in the tables.
The group $\Gamma$ is generated by $A\cup B$ where
$A=\{1,-1,2,-2\}$ and $B=\{3,-3,4,-4\}$.
The fixed point free involution on each of these sets is given by
$x\mapsto -x$. Thus $x^{-1}$ is denoted $-x$.
Moreover the relations are written in the form $aba^{\prime}b^{\prime}=1$
rather than the form  $ab=b^{\prime}a^{\prime}$ used in
Definition \ref{def BM group}.
For example, the first relation for the presentation of group $2\times 2.01$ is
$(1)(3)(1)(3)^{-1}=1$.
The tables are simplified by use of the abbreviations:
\begin{itemize}
\item[] $m\, [a,b,...]$ means $\ZZ^m\oplus\ZZ/a\ZZ \oplus \ZZ/b\ZZ \oplus \dots$;
\item[] $(j)a$ means $a,a,...,a$ \quad  with $j$ repetitions.
\end{itemize}

\bigskip

\bigskip

{\tiny
\[
\begin{tabular}{|l|l|l|l|}
\hline
Group & Presentation
&$H_1\left( \Gamma \right) $
&$C\left( \Gamma \right) $
\\\hline
2x2.01
&$\begin{matrix}
+1&+3&+1&-3\qquad&+1&+4&+1&-4
\\+2&+3&+2&-3\qquad&+2&+4&+2&-4
\end{matrix}$
&2$\left[ \left( 2\right) 2\right] $
&2$\left[ \left( 2\right) 4\right] $
\\\hline
2x2.02
&$\begin{matrix}
+1&+3&+1&-3\qquad&+1&+4&+1&-4
\\+2&+3&+2&-3\qquad&+2&+4&-2&+4
\end{matrix}$
&1$\left[ \left( 3\right) 2\right] $
&1$\left[ \left( 3\right) 4\right] $
\\\hline
2x2.03
&$\begin{matrix}
+1&+3&+1&-3\qquad&+1&+4&+1&-4
\\+2&+3&+2&-3\qquad&+2&+4&-2&-4
\end{matrix}$
&2$\left[ \left( 2\right) 2\right] $
&2$\left[ \left( 2\right) 4\right] $
\\\hline
2x2.04
&$\begin{matrix}
+1&+3&+1&-3\qquad&+1&+4&+1&-4
\\+2&+3&+2&+4\qquad&+2&-3&+2&-4
\end{matrix}$
&1$\left[ 2,4\right] $
&1$\left[ 2,4,8\right] $
\\\hline
2x2.05
&$\begin{matrix}
+1&+3&+1&-3\qquad&+1&+4&+1&-4
\\+2&+3&-2&+3\qquad&+2&+4&-2&+4
\end{matrix}$
&1$\left[ \left( 3\right) 2\right] $
&1$\left[ \left( 3\right) 4\right] $
\\\hline
2x2.06
&$\begin{matrix}
+1&+3&+1&-3\qquad&+1&+4&+1&-4
\\+2&+3&-2&+3\qquad&+2&+4&-2&-4
\end{matrix}$
&2$\left[ \left( 2\right) 2\right] $
&2$\left[ \left( 2\right) 4\right] $
\\\hline
2x2.07
&$\begin{matrix}
+1&+3&+1&-3\qquad&+1&+4&+1&-4
\\+2&+3&-2&-3\qquad&+2&+4&-2&-4
\end{matrix}$
&3$\left[ 2\right] $
&3$\left[ 4\right] $
\\\hline
2x2.08
&$\begin{matrix}
+1&+3&+1&-3\qquad&+1&+4&+1&-4
\\+2&+3&-2&+4\qquad&+2&+4&-2&+3
\end{matrix}$
&2$\left[ 2\right] $
&2$\left[ 2,4\right] $
\\\hline
2x2.09
&$\begin{matrix}
+1&+3&+1&-3\qquad&+1&+4&+1&-4
\\+2&+3&-2&+4\qquad&+2&+4&-2&-3
\end{matrix}$
&1$\left[ \left( 2\right) 2\right] $
&1$\left[ \left( 2\right) 2,4\right] $
\\\hline
2x2.10
&$\begin{matrix}
+1&+3&+1&-3\qquad&+1&+4&-1&+4
\\+2&+3&+2&-3\qquad&+2&+4&-2&-4
\end{matrix}$
&1$\left[ \left( 3\right) 2\right] $
&1$\left[ \left( 3\right) 4\right] $
\\\hline
2x2.11
&$\begin{matrix}
+1&+3&+1&-3\qquad&+1&+4&-1&+4
\\+2&+3&+2&+4\qquad&+2&-3&+2&-4
\end{matrix}$
&0$\left[ \left( 2\right) 2,4\right] $
&0$\left[ \left( 2\right) 4,8\right] $
\\\hline
2x2.12
&$\begin{matrix}
+1&+3&+1&-3\qquad&+1&+4&-1&+4
\\+2&+3&-2&+3\qquad&+2&+4&+2&-4
\end{matrix}$
&0$\left[ \left( 4\right) 2\right] $
&0$\left[ \left( 2\right) 2,\left( 3\right) 4\right] $
\\\hline
2x2.13
&$\begin{matrix}
+1&+3&+1&-3\qquad&+1&+4&-1&+4
\\+2&+3&-2&+3\qquad&+2&+4&-2&-4
\end{matrix}$
&1$\left[ \left( 3\right) 2\right] $
&1$\left[ \left( 3\right) 4\right] $
\\\hline
2x2.14
&$\begin{matrix}
+1&+3&+1&-3\qquad&+1&+4&-1&+4
\\+2&+3&-2&-3\qquad&+2&+4&-2&-4
\end{matrix}$
&2$\left[ \left( 2\right) 2\right] $
&2$\left[ \left( 2\right) 4\right] $
\\\hline
2x2.15
&$\begin{matrix}
+1&+3&+1&-3\qquad&+1&+4&-1&+4
\\+2&+3&-2&+4\qquad&+2&+4&-2&+3
\end{matrix}$
&1$\left[ \left( 2\right) 2\right] $
&1$\left[ \left( 2\right) 4\right] $
\\\hline
2x2.16
&$\begin{matrix}
+1&+3&+1&-3\qquad&+1&+4&-1&+4
\\+2&+3&-2&+4\qquad&+2&+4&-2&-3
\end{matrix}$
&1$\left[ \left( 2\right) 2\right] $
&1$\left[ \left( 2\right) 4\right] $
\\\hline
2x2.17
&$\begin{matrix}
+1&+3&+1&-3\qquad&+1&+4&-1&-4
\\+2&+3&+2&-3\qquad&+2&+4&-2&-4
\end{matrix}$
&2$\left[ \left( 2\right) 2\right] $
&2$\left[ \left( 2\right) 4\right] $
\\\hline
2x2.18
&$\begin{matrix}
+1&+3&+1&-3\qquad&+1&+4&-1&-4
\\+2&+3&+2&+4\qquad&+2&-3&+2&-4
\end{matrix}$
&1$\left[ 2,4\right] $
&1$\left[ 2,4,8\right] $
\\\hline
2x2.19
&$\begin{matrix}
+1&+3&+1&-3\qquad&+1&+4&-1&-4
\\+2&+3&-2&-3\qquad&+2&+4&+2&-4
\end{matrix}$
&2$\left[ \left( 2\right) 2\right] $
&2$\left[ \left( 2\right) 4\right] $
\\\hline
2x2.20
&$\begin{matrix}
+1&+3&+1&-3\qquad&+1&+4&-1&-4
\\+2&+3&-2&-3\qquad&+2&+4&-2&+4
\end{matrix}$
&2$\left[ \left( 2\right) 2\right] $
&2$\left[ \left( 2\right) 4\right] $
\\\hline
\end{tabular}
\]

\newpage
\[
\begin{tabular}{|l|l|l|l|}
\hline
Group & Presentation
&$H_1\left( \Gamma \right) $
&$C\left( \Gamma \right) $
\\\hline
2x2.21
&$\begin{matrix}
+1&+3&+1&-3\qquad&+1&+4&-1&-4
\\+2&+3&-2&-3\qquad&+2&+4&-2&-4
\end{matrix}$
&3$\left[ 2\right] $
&3$\left[ 4\right] $
\\\hline
2x2.22
&$\begin{matrix}
+1&+3&+1&-3\qquad&+1&+4&-1&-4
\\+2&+3&-2&+4\qquad&+2&+4&-2&+3
\end{matrix}$
&2$\left[ 2\right] $
&2$\left[ 2,4\right] $
\\\hline
2x2.23
&$\begin{matrix}
+1&+3&+1&-3\qquad&+1&+4&-1&-4
\\+2&+3&-2&+4\qquad&+2&+4&-2&-3
\end{matrix}$
&1$\left[ \left( 2\right) 2\right] $
&1$\left[ \left( 2\right) 2,4\right] $
\\\hline
2x2.24
&$\begin{matrix}
+1&+3&+1&-3\qquad&+1&+4&+2&+4
\\+1&-4&+2&-4\qquad&+2&+3&+2&-3
\end{matrix}$
&1$\left[ 2,4\right] $
&1$\left[ \left( 2\right) 2,8\right] $
\\\hline
2x2.25
&$\begin{matrix}
+1&+3&+1&-3\qquad&+1&+4&+2&+4
\\+1&-4&+2&-4\qquad&+2&+3&-2&-3
\end{matrix}$
&1$\left[ 2,4\right] $
&1$\left[ \left( 2\right) 2,8\right] $
\\\hline
2x2.26
&$\begin{matrix}
+1&+3&+1&-3\qquad&+1&+4&+2&-4
\\+1&-4&+2&+4\qquad&+2&+3&+2&-3
\end{matrix}$
&2$\left[ 2\right] $
&2$\left[ \left( 2\right) 2\right] $
\\\hline
2x2.27
&$\begin{matrix}
+1&+3&+1&-3\qquad&+1&+4&+2&-4
\\+1&-4&+2&+4\qquad&+2&+3&-2&-3
\end{matrix}$
&2$\left[ 2\right] $
&2$\left[ \left( 2\right) 2\right] $
\\\hline
2x2.28
&$\begin{matrix}
+1&+3&+1&-3\qquad&+1&+4&+2&-4
\\+1&-4&-2&+4\qquad&+2&+3&+2&-3
\end{matrix}$
&2$\left[ 2\right] $
&2$\left[ \left( 2\right) 2\right] $
\\\hline
2x2.29
&$\begin{matrix}
+1&+3&+1&-3\qquad&+1&+4&+2&-4
\\+1&-4&-2&+4\qquad&+2&+3&-2&-3
\end{matrix}$
&2$\left[ 2\right] $
&2$\left[ \left( 2\right) 2\right] $
\\\hline
2x2.30
&$\begin{matrix}
+1&+3&+1&+4\qquad&+1&-3&+1&-4
\\+2&+3&+2&+4\qquad&+2&-3&+2&-4
\end{matrix}$
&1$\left[ 2,4\right] $
&1$\left[ 2,4,8\right] $
\\\hline
2x2.31
&$\begin{matrix}
+1&+3&+1&+4\qquad&+1&-3&+1&-4
\\+2&+3&+2&-4\qquad&+2&-3&+2&+4
\end{matrix}$
&0$\left[ 2,\left( 2\right) 4\right] $
&0$\left[ \left( 2\right) 2,\left( 2\right) 8\right] $
\\\hline
2x2.32
&$\begin{matrix}
+1&+3&+1&+4\qquad&+1&-3&+1&-4
\\+2&+3&-2&-3\qquad&+2&+4&-2&-4
\end{matrix}$
&2$\left[ 4\right] $
&2$\left[ 2,8\right] $
\\\hline
2x2.33
&$\begin{matrix}
+1&+3&+1&+4\qquad&+1&-3&+1&-4
\\+2&+3&-2&+4\qquad&+2&+4&-2&+3
\end{matrix}$
&2$\left[ 2\right] $
&2$\left[ 2,4\right] $
\\\hline
2x2.34
&$\begin{matrix}
+1&+3&+1&+4\qquad&+1&-3&+1&-4
\\+2&+3&-2&+4\qquad&+2&+4&-2&-3
\end{matrix}$
&1$\left[ \left( 2\right) 2\right] $
&1$\left[ \left( 2\right) 2,4\right] $
\\\hline
2x2.35
&$\begin{matrix}
+1&+3&+1&+4\qquad&+1&-3&+1&-4
\\+2&+3&-2&-4\qquad&+2&+4&-2&-3
\end{matrix}$
&1$\left[ 2,4\right] $
&1$\left[ \left( 2\right) 2,8\right] $
\\\hline
2x2.36
&$\begin{matrix}
+1&+3&+1&+4\qquad&+1&-3&+2&-3
\\+1&-4&+2&-4\qquad&+2&+3&+2&+4
\end{matrix}$
&0$\left[ 4,8\right] $
&0$\left[ \left( 2\right) 2,4,8\right] $
\\\hline
2x2.37
&$\begin{matrix}
+1&+3&+1&+4\qquad&+1&-3&+2&-3
\\+1&-4&-2&-4\qquad&+2&+3&+2&-4
\end{matrix}$
&0$\left[ \left( 2\right) 2,\left( 2\right) 3\right] $
&0$\left[ \left( 2\right) 4,\left( 2\right) 3\right] $
\\\hline
2x2.38
&$\begin{matrix}
+1&+3&+1&+4\qquad&+1&-3&+2&-4
\\+1&-4&+2&-3\qquad&+2&+3&+2&+4
\end{matrix}$
&1$\left[ 8\right] $
&1$\left[ \left( 2\right) 2,8\right] $
\\\hline
2x2.39
&$\begin{matrix}
+1&+3&+1&+4\qquad&+1&-3&+2&-4
\\+1&-4&-2&-3\qquad&+2&+3&-2&+4
\end{matrix}$
&1$\left[ 2\right] $
&1$\left[ \left( 3\right) 2\right] $
\\\hline
2x2.40
&$\begin{matrix}
+1&+3&+1&+4\qquad&+1&-3&+2&-4
\\+1&-4&-2&-3\qquad&+2&+3&-2&-4
\end{matrix}$
&0$\left[ \left( 2\right) 2,3\right] $
&0$\left[ \left( 2\right) 2,4,3\right] $
\\\hline
2x2.41
&$\begin{matrix}
+1&+3&-1&-3\qquad&+1&+4&-1&-4
\\+2&+3&-2&-3\qquad&+2&+4&-2&-4
\end{matrix}$
&4$\left[ \right] $
&4$\left[ \right] $
\\\hline
2x2.42
&$\begin{matrix}
+1&+3&-1&-3\qquad&+1&+4&-1&-4
\\+2&+3&-2&+4\qquad&+2&+4&-2&+3
\end{matrix}$
&3$\left[ \right] $
&3$\left[ 2\right] $
\\\hline
2x2.43
&$\begin{matrix}
+1&+3&-1&-3\qquad&+1&+4&-1&-4
\\+2&+3&-2&+4\qquad&+2&+4&-2&-3
\end{matrix}$
&2$\left[ 2\right] $
&2$\left[ \left( 2\right) 2\right] $
\\\hline
2x2.44
&$\begin{matrix}
+1&+3&-1&+4\qquad&+1&+4&-1&+3
\\+2&+3&-2&+4\qquad&+2&+4&-2&+3
\end{matrix}$
&3$\left[ \right] $
&3$\left[ 2\right] $
\\\hline
2x2.45
&$\begin{matrix}
+1&+3&-1&+4\qquad&+1&+4&-1&+3
\\+2&+3&-2&+4\qquad&+2&+4&-2&-3
\end{matrix}$
&2$\left[ 2\right] $
&2$\left[ \left( 2\right) 2\right] $
\\\hline
2x2.46
&$\begin{matrix}
+1&+3&-1&+4\qquad&+1&+4&-1&+3
\\+2&+3&-2&-4\qquad&+2&+4&-2&-3
\end{matrix}$
&2$\left[ 2\right] $
&2$\left[ \left( 2\right) 2\right] $
\\\hline
2x2.47
&$\begin{matrix}
+1&+3&-1&+4\qquad&+1&+4&-1&-3
\\+2&+3&-2&+4\qquad&+2&+4&-2&-3
\end{matrix}$
&2$\left[ 2\right] $
&2$\left[ \left( 2\right) 2\right] $
\\\hline
2x2.48
&$\begin{matrix}
+1&+3&-1&+4\qquad&+1&+4&+2&+3
\\+1&-4&+2&-3\qquad&+2&+4&-2&+3
\end{matrix}$
&2$\left[ \right] $
&2$\left[ \left( 2\right) 2\right] $
\\\hline
2x2.49
&$\begin{matrix}
+1&+3&-1&+4\qquad&+1&+4&+2&+3
\\+1&-4&+2&-3\qquad&+2&+4&-2&-3
\end{matrix}$
&1$\left[ 2\right] $
&1$\left[ 2,4\right] $
\\\hline
2x2.50
&$\begin{matrix}
+1&+3&-1&+4\qquad&+1&+4&+2&-3
\\+1&-4&+2&+3\qquad&+2&+4&-2&+3
\end{matrix}$
&1$\left[ 4\right] $
&1$\left[ \left( 2\right) 2,4\right] $
\\\hline
2x2.51
&$\begin{matrix}
+1&+3&+2&+4\qquad&+1&-3&+2&-4
\\+1&+4&+2&+3\qquad&+1&-4&+2&-3
\end{matrix}$
&2$\left[ 2\right] $
&2$\left[ 2,4\right] $
\\\hline
2x2.52
&$\begin{matrix}
+1&+3&+2&+4\qquad&+1&-3&+2&-4
\\+1&+4&+2&-3\qquad&+1&-4&+2&+3
\end{matrix}$
&1$\left[ \left( 2\right) 2\right] $
&1$\left[ \left( 2\right) 2,4\right] $
\\\hline
\end{tabular}
\]

}  %END FOOTNOTESIZE

\bigskip

\end{document}